\newcommand{\map}[1]{\xrightarrow{#1}}
\newcommand{\iso}{\cong}
\newcommand{\define}{\stackrel{\mathrm{def}}{=}}
\DeclareMathOperator{\Hom}{Hom}
\DeclareMathOperator{\Aut}{Aut}
\DeclareMathOperator{\Spec}{Spec}
\DeclareMathOperator{\GL}{GL}
\DeclareMathOperator{\Herm}{Herm}
\DeclareMathOperator{\Sht}{Sht}
\DeclareMathOperator{\Ch}{Ch}
\DeclareMathOperator{\Prym}{Prym}
\DeclareMathOperator{\rank}{rank}
\DeclareMathOperator{\Pic}{Pic}
\newcommand{\Q}{\mathbb Q}
\newcommand{\C}{\mathbb C}
\newcommand{\A}{\mathbb A}
\newcommand{\bL}{\mathbb L}
\newcommand{\co}{\mathcal O}
\newcommand{\cE}{\mathcal{E}}
\newcommand{\cZ}{\mathcal{Z}}
\newcommand{\sZ}{\mathscr{Z}}
\begin{document}
\author{Yongyi Chen and Benjamin Howard}

\address{Department of Mathematics, Boston College, 140 Commonwealth Ave, Chestnut Hill, MA 02467, USA}
\email{howardbe@bc.edu}
\address{Department of Mathematics, Boston College, 140 Commonwealth Ave, Chestnut Hill, MA 02467, USA}
\email{chendgl@bc.edu}

\thanks{
B.H. was supported in part by NSF grant DMS-2101636.
}

\title{Intersection formulas on moduli spaces of unitary  shtukas}
\date{\today}

\begin{abstract}
Feng-Yun-Zhang have proved a function field analogue of the arithmetic Siegel-Weil formula, relating special cycles on moduli spaces of unitary shtukas to higher derivatives of Eisenstein series.
We prove an extension of this formula in a low-dimensional case, and deduce from it a Gross-Zagier style formula expressing intersection multiplicities of cycles in terms of higher derivatives of base-change $L$-functions.
\end{abstract}

\maketitle

\theoremstyle{plain}
\newtheorem{theorem}{Theorem}[subsection]
\newtheorem{bigtheorem}{Theorem}
\newtheorem{proposition}[theorem]{Proposition}
\newtheorem{lemma}[theorem]{Lemma}
\newtheorem{corollary}[theorem]{Corollary}
\newtheorem{conjecture}[theorem]{Conjecture}
\newtheorem{bigconjecture}[bigtheorem]{Conjecture}

\theoremstyle{definition}
\newtheorem{definition}[theorem]{Definition}
\newtheorem{hypothesis}[theorem]{Hypothesis}

\theoremstyle{remark}
\newtheorem{remark}[theorem]{Remark}
\newtheorem{example}[theorem]{Example}
\newtheorem{question}[theorem]{Question}

\renewcommand{\thebigtheorem}{\Alph{bigtheorem}}

\numberwithin{equation}{subsection}


\section{Introduction}


Feng-Yun-Zhang have defined special cycle classes on moduli spaces of unitary shtukas, and proved an arithmetic Siegel-Weil formula (in the sense of Kudla and Kudla-Rapoport) relating them to nonsingular Fourier coefficients of higher derivatives of  Siegel Eisenstein series.

One would like to  extend these results to singular coefficients, and then exploit this connection between arithmetic geometry and automorphic forms to prove a Gross-Zagier style formula relating intersection multiplicities of  special cycles to higher derivatives of Langlands $L$-functions.
Both problems are difficult because the moduli spaces on which the special cycles live are almost never proper.

The goal of this paper is to  prove an arithmetic Siegel-Weil formula for (some) singular Fourier coefficients in a low-dimensional case, and then use it to formulate and prove a Gross-Zagier style formula  in a way that circumvents the failure of properness of the moduli space of shtukas on which the special cycles live.  This Gross-Zagier style formula is conditional on the modularity conjecture of Feng-Yun-Zhang.


\subsection{The Feng-Yun-Zhang classes}


Let $k$ be a finite field of odd cardinality $q$, and fix a finite \'etale double cover
\[
X'\to X
\]
  of smooth, projective, geometrically connected   curves over  $k$.
 Denote by $\sigma \in \Aut(X'/X)$ the nontrivial Galois automorphism.

Fix  integers $n\ge 1$ and $r \ge 0$. Our \'etale double cover  determines a moduli stack  $\Sht^r_{\mathrm{U}(n)}$ of $\mathrm{U}(n)$-shtukas with $r$ legs.
  It is a Deligne-Mumford stack over $k$, locally of finite type, equipped with a smooth morphism
\[
\Sht^r_{\mathrm{U}(n)} \to (X')^r
\]
of relative dimension $r(n-1)$.  In particular, $\Sht^r_{\mathrm{U}(n)}$ is smooth over $k$ of dimension $rn$.
It is nonempty if and only if $r$ is even (\cite[Lemma 6.7]{FYZ1}), which we assume from now on.

To a pair $(\mathcal{E},a)$ consisting of  a vector bundle  $\mathcal{E}$ on $X'$ of rank $m \le n$ and a hermitian morphism $a\colon\mathcal{E} \to \sigma^* \mathcal{E}^\vee$ (in the sense of \S \ref{ss:notation}),  Feng-Yun-Zhang \cite[\S 7]{FYZ1} associate a \emph{naive special cycle} $\cZ^r_\mathcal{E}(a)$.
This is a Deligne-Mumford stack equipped with a finite  morphism
\[
\cZ^r_\mathcal{E}(a) \to \Sht^r_{\mathrm{U}(n)} .
\]
The naive special cycle has expected dimension $r(n-m)$, but this expectation is rarely fulfilled.
To correct for this, one finds in \cite[Definition 4.8]{FYZ2} the construction of a cycle class
\begin{equation}\label{FYZclass}
[ \cZ^r_\mathcal{E}(a) ] \in \mathrm{Ch}_{r(n-m)} ( \cZ^r_\mathcal{E}(a) )
\end{equation}
on $\cZ^r_\mathcal{E}(a)$ of the desired dimension.
When no confusion can arise, we denote in the same way the image of this class under the pushforward
\begin{equation}\label{bad push}
\mathrm{Ch}_{r(n-m)} ( \cZ^r_\mathcal{E}(a) )
\to
\mathrm{Ch}^{rm} ( \Sht^r_{\mathrm{U}(n)} ) .
\end{equation}
These special cycles are the function field analogues of the Kudla-Rapoport cycles \cite{KR}   on Shimura varieties for unitary groups of signature $(n-1,1)$.


It sometimes happens that the naive special cycle $ \cZ^r_\mathcal{E}(a)$ is a proper $k$-stack.
When this  is the case  the above pushforward can be refined to a homomorphism
\begin{equation}\label{good push}
\mathrm{Ch}_{r (n-m)} ( \cZ^r_\mathcal{E}(a) )
\to
\mathrm{Ch}_c^{rm} ( \Sht^r_{\mathrm{U}(n)} )
\end{equation}
valued in the Chow group with proper support, and  so \eqref{FYZclass}  determines  a class there as well.
  Using  the intersection pairing
 \begin{equation}\label{proper pairing}
\Ch^{r(n-m)}( \Sht^r_{\mathrm{U}(n)} ) \times \Ch_c^{rm}( \Sht^r_{\mathrm{U}(n)} )
\to
\Ch_c^{rn}( \Sht^r_{\mathrm{U}(n)} )
\end{equation}
and the degree map
\begin{equation}\label{degree}
\deg\colon  \mathrm{Ch}_c^{r n} (  \Sht_{\mathrm{U}(n)}^r ) \to \Q,
\end{equation}
we obtain a $\Q$-valued intersection  between $[\mathcal{Z}^r_\mathcal{E}(a)]$ and any codimension $r(n-m)$ cycle class on  $\Sht^r_{\mathrm{U}(n)}$.


\subsection{The modularity conjecture}


Let us restrict to the case $n=2m$, so that the special cycles \eqref{FYZclass} determine classes in the  middle codimension Chow group of $\Sht^r_{ \mathrm{U}(n) }$.

 Denote by $F=k(X)$ the field of rational functions on $X$, and similarly for  $F'=k(X')$.  Let
\[
\chi\colon \A_{F'}^\times \to \C^\times
\]
be an unramified Hecke character whose restriction to $\A_F^\times$ is trivial.

Let $\mathcal{A}(H_m)$ be the space of automorphic forms on the rank $2m$ quasi-split unitary group $H_m$ over $F$.
Any form  $f\in \mathcal{A}(H_m)$ fixed by the standard maximal compact open subgroup
 $K_m \subset H_m(\A_F)$  is determined by its \emph{geometric Fourier coefficients}
$f_{(\mathcal{E},a)}$.  These are defined in \S \ref{ss:fourier}, but at the moment all the reader needs to know is that this is a collection of complex numbers indexed by pairs $(\mathcal{E},a)$ consisting of a vector bundle $\mathcal{E}$ of rank $m$ on $X'$ and a hermitian morphism $a\colon\mathcal{E} \to\sigma^*\mathcal{E}^\vee$.

More generally, suppose  $C$ is any $\C$-vector space.  A function
\[
f : H_m(F)   \backslash H_m (\A_F) /K_m  \to C
\]
will be referred to as an \emph{unramified automorphic form valued in $C$}.    Any linear functional $\lambda : C \to \C$ determines a $\C$-valued automorphic form $\lambda \circ f$, and the \emph{geometric Fourier coefficients} of $f$ are the unique $f_{ ( \mathcal{E},a)} \in C$ satisfying
\[
\lambda( f_{ ( \mathcal{E},a)} )   = ( \lambda \circ f)_{ ( \mathcal{E},a)  }.
\]

The following is  the modularity conjecture of \cite[Conjecture 4.15]{FYZ2}, restricted to the case of cycles in middle codimension.
Abbreviate
\begin{equation}\label{eq:dE}
d(\mathcal{E}) \define \frac{\deg(\mathcal{E}^\vee) - \deg(\mathcal{E}) }{2} = \mathrm{rank}(\mathcal{E}) \deg(\omega_X) - \deg(\mathcal{E})
\end{equation}
for any vector bundle $\mathcal{E}$ on $X'$.

\begin{conjecture}[Feng-Yun-Zhang]\label{conj:modularity}
There is  an unramified automorphic form
\[
Z^{r,\chi}\colon  H_m(F)   \backslash H_m (\A_F) /K_m  \to \mathrm{Ch}^{r m}( \mathrm{Sht}^r_{U(n)}  )_\C
\]
 whose geometric Fourier coefficients   are the rescaled special cycles
\[
Z^{r,\chi}_{ ( \mathcal{E} ,a)  }   =
  \frac{ \chi( \det(\mathcal{E} ) ) } {  q^{ m d(\mathcal{E} ) } }  \,   [\cZ^r _{ \mathcal{E}}  ( a )  ] \in  \mathrm{Ch}^{r m}( \mathrm{Sht}^r_{U(n)}  )_\C .
\]
See \S \ref{ss:eisenstein} for the meaning of $\chi(\det(\mathcal{E}))$.
\end{conjecture}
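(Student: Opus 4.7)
\emph{Proof plan.} The goal is to show that the prescribed assignment $(\mathcal{E},a) \mapsto Z^{r,\chi}_{(\mathcal{E},a)}$ arises as the system of geometric Fourier coefficients of some $\Ch^{rm}(\Sht^r_{\mathrm{U}(n)})_\C$-valued function on the double coset space $H_m(F)\backslash H_m(\A_F)/K_m$. In the function field setting any $K_m$-invariant automorphic form is uniquely determined by its geometric Fourier coefficients, so the existence problem is equivalent to verifying that the formal series
\[
Z^{r,\chi}(g) \define \sum_{(\mathcal{E},a)}   Z^{r,\chi}_{(\mathcal{E},a)}  \cdot  W_{(\mathcal{E},a)}(g),
\]
with $W_{(\mathcal{E},a)}$ the Whittaker-type function on $H_m(\A_F)$ matched to $(\mathcal{E},a)$, is left invariant under $H_m(F)$ (and satisfies the very mild growth condition that function field automorphy requires).

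Using the Bruhat decomposition of $H_m$ with respect to the Siegel parabolic $P_m$, whose Levi is identified with $\GL_m$ over $F'$, left invariance under $H_m(F)$ reduces to two checks: (i) invariance under $P_m(F)$, and (ii) invariance under a representative $w_m$ of the nontrivial Weyl element interchanging $P_m$ with its opposite. Step (i) is essentially tautological: the action of $g \in \GL_m(F')$ on hermitian pairs induces a manifest isomorphism of the associated naive special cycles, and invariance under the unipotent radical is built into the shape of the Fourier expansion. The rescaling factor $\chi(\det\mathcal{E}) / q^{md(\mathcal{E})}$ in Conjecture~\ref{conj:modularity} is chosen precisely to absorb the modular character of $P_m$ and to incorporate the twist by $\chi$; verifying this is mostly a bookkeeping exercise using the formula $d(\mathcal{E}) = m\deg(\omega_X) - \deg(\mathcal{E})$.

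The essential difficulty lies in step (ii), which is equivalent to a \emph{functional equation} for the generating series $Z^{r,\chi}(g)$. The natural route is to invoke the arithmetic Siegel-Weil formula of Feng-Yun-Zhang, which equates certain intersection numbers involving $[\cZ^r_\mathcal{E}(a)]$ with higher derivatives of Fourier coefficients of Siegel Eisenstein series on $H_m$. Because Eisenstein series are manifestly automorphic and satisfy an intertwining-operator functional equation, one extracts from this matching a family of relations among the numerical invariants of the $Z^{r,\chi}_{(\mathcal{E},a)}$ that are exactly consistent with modularity. The principal obstacle, and the reason the statement is offered as a conjecture, is that these identities hold only \emph{after pairing with a fundamental-type class}, whereas modularity is required at the level of cycle classes in $\Ch^{rm}(\Sht^r_{\mathrm{U}(n)})_\C$ itself. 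Bridging this gap would require either a refinement of arithmetic Siegel-Weil to a cycle-class identity (for example via a doubling-method or intertwining argument that separates the two sets of variables before taking degrees), or a direct geometric construction of $Z^{r,\chi}(g)$ as the pullback of a Kudla-Millson type theta form built from the universal Hermitian bundle on $\Sht^r_{\mathrm{U}(n)}$. Either route appears substantially beyond the tools currently available in the function field setting, and this is the main obstacle the plan must overcome.
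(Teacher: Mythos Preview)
The statement you are addressing is a \emph{conjecture} in the paper, not a theorem: the paper records it as the modularity conjecture of Feng--Yun--Zhang and offers no proof. Indeed, the paper's main arithmetic theta lift result (Theorem~\ref{thm:GKZ}) is explicitly stated to be conditional on Conjecture~\ref{conj:modularity}. There is therefore no ``paper's own proof'' against which to compare your proposal.

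Your write-up is not really a proof attempt but rather a diagnosis of why the conjecture is hard, and on that count it is accurate: you correctly isolate the key obstruction, namely that the arithmetic Siegel--Weil formula of Feng--Yun--Zhang controls only \emph{degrees} of $0$-cycles (numerical invariants after pairing), whereas modularity is a statement about the cycle classes $[\cZ^r_\mathcal{E}(a)]$ themselves in $\Ch^{rm}(\Sht^r_{\mathrm{U}(n)})_\C$. Your step~(i) (invariance under $P_m(F)$) is indeed close to tautological and is essentially contained in the definitions of \cite{FYZ2}; the genuine content is entirely in step~(ii), and as you note, no available tool bridges the gap from degree identities to cycle-class identities. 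So your conclusion---that a proof is currently out of reach---matches the paper's stance, but you should not frame this as a proof plan when it is, by your own admission, an explanation of why no proof is forthcoming.
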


Our first result provides some evidence toward this conjecture in the low-dimensional case of $m=1$ and $n=2$, in the same style as  \cite[Part 3]{FYZ2}.

\begin{theorem}\label{thm:simple duplication}
Suppose $\mathcal{E}_2$ is a line bundle on $X'$,  and $a_2\colon \mathcal{E}_2\iso \sigma^*\mathcal{E}_2^\vee$ is a hermitian isomorphism.  The naive special cycle
\[
\mathcal{Z}^r_{\mathcal{E}_2}(a_2) \to \Sht^r_{ \mathrm{U}(2) }
\]
 is proper over $k$, and
 there exists a $K_1$-fixed automorphic form $\mathscr{D} \in \mathcal{A}(H_1)$, depending on $(\mathcal{E}_2,a_2)$,  whose geometric Fourier coefficients are given by
\begin{equation}\label{intro duplication}
\mathscr{D}_{ (\mathcal{E}_1,a_1) } =
  \frac{ \chi( \mathcal{E}_1 ) } {  q^{  d(\mathcal{E}_1 ) } }  \,   \deg  \big(    [ \cZ^r_{\mathcal{E}_1}(a_1)]  \cdot [ \cZ^r_{\mathcal{E}_2}(a_2)] \big)
\end{equation}
for every line bundle $\mathcal{E}_1$ on $X'$ and every hermitian map $a_1:\mathcal{E}_1 \to \sigma^* \mathcal{E}_1^\vee$.

In other words, if we hold $(\mathcal{E}_2,a_2)$ fixed and allow $(\mathcal{E}_1,a_1)$ to vary, the intersection multiplicities on the right hand side of \eqref{intro duplication} are the geometric Fourier coefficients of an automorphic form.
\end{theorem}

This result appears in the text as part (3) of Theorem \ref{thm:final}.
The automorphic form $\mathscr{D}$  is essentially the kernel function appearing in the \emph{new way} integrals of \cite{PSRb, Qin}; see Remark \ref{rem:new way}.

The core of the proof of Theorem \ref{thm:simple duplication} is   Theorem  \ref{thm:intersection}.
This latter theorem generalizes  the Feng-Yun-Zhang arithmetic Siegel-Weil formula on $\Sht^r_{ \mathrm{U}(2) }$, which relates the degrees of certain $0$-cycles to nonsingular Fourier coefficients of Eisenstein series, to include at least some singular Fourier coefficients.


\subsection{Intersections with arithmetic theta lifts}


We continue to assume that $n=2m$ is even, and that
$
\chi : \A_{F'}^\times \to \C^\times
$
is an unramified Hecke character  whose restriction to $\A_F^\times$ is trivial.

Assuming Conjecture \ref{conj:modularity}, for a $K_m$-fixed  form  $f$ in an irreducible cuspidal automorphic representation $\pi \subset \mathcal{A}(H_m)$ we define  the \emph{arithmetic theta lift}
  \begin{equation}\label{arith theta}
\vartheta^{r,\chi}(f) = \int_{H_m(F) \backslash H_m(\A_F) } f(h) Z^{r,\chi}(h) \, dh \in  \Ch^{rm}( \Sht^r_{\mathrm{U}(n)} ) _\C.
  \end{equation}
Following ideas of Kudla \cite[\S 8]{Kud} in the context of  Shimura varieties, developed further by Liu \cite{Liu} and Li-Liu \cite{LL},
one could  hope for an \emph{arithmetic Rallis inner product formula}
\begin{equation}\label{RIP}
\deg  \big(  \vartheta^{r,\chi}(f) \cdot  \vartheta^{r,\chi}(f)   \big)
\stackrel{?}{=}  C\cdot \frac{d^r}{ds^r} \Big|_{s=0}  L(s+1/2, \mathrm{BC}(\pi) \otimes \chi)
\end{equation}
(for some explicit constant $C$)
relating the self-intersection multiplicity of $\vartheta^{r,\chi}(f)$ to  the $r^\mathrm{th}$ central derivative of the twisted base-change $L$-function at the center  of its functional equation.

Unfortunately, in the present context the left hand side of \eqref{RIP} is not defined, as  the moduli space $\Sht^r_{\mathrm{U}(n)}$ is not proper as soon as $n \ge 2$.
While one can form the self-intersection of $ \vartheta^{r,\chi}(f)$ as an element in the usual Chow group (i.e.~not the Chow group with proper support) of $0$-cycles on $\Sht^r_{\mathrm{U}(n)}$,  there is no analogue of the degree map \eqref{degree} on this Chow group.

To circumvent this, fix a pair $(\mathcal{E}_2,a_2)$ consisting of a rank $m$ vector bundle $\mathcal{E}_2$ on $X'$ and a hermitian morphism
$a_2 : \mathcal{E}_2 \to \sigma^* \mathcal{E}_2^\vee$, and  \emph{assume} that the associated naive special cycle
$
\cZ^r_{\mathcal{E}_2}(a_2) \to \Sht^r_{\mathrm{U}(n)}
$
 is proper over $k$.    Using  \eqref{proper pairing} and \eqref{degree} to  intersect
\[
[ \cZ^r_{\mathcal{E}_2}(a_2)]  \in  \Ch^{rm}_c( \Sht^r_{ \mathrm{U}(n) } )
\]
against the arithmetic theta lift \eqref{arith theta}, we propose the following Gross-Zagier style intersection formula.

\begin{conjecture}\label{conj:GKZ}
Assuming the properness of $\cZ^r_{\mathcal{E}_2}(a_2)$,  for any $K_m$-invariant   $f\in \pi \subset \mathcal{A}(H_m)$ as above  we have
\begin{align}    \label{GKZ}
& \deg \big(   \vartheta^{r,\chi} (f) \cdot [ \cZ^r_{\mathcal{E}_2}(a_2)] \big)  \\
& =
  f_{( \mathcal{E}_2 , -a_2)  }    \frac{    q^{md(\mathcal{E}_2) }    }{ (\log q)^r}
    \frac{d^r}{ds^r}\Big|_{s=0}\left( q^{  ns \deg ( \omega_X)  }    L( s +1/2  ,  \mathrm{BC}(\pi) \otimes \chi )\right),
    \nonumber
\end{align}
where $f_{(\mathcal{E}_2,-a_2)}$ is the geometric Fourier coefficient   in the sense of \S \ref{ss:fourier}.
\end{conjecture}

The properness assumption imposed  on  $\cZ^r_{\mathcal{E}_2}(a_2)$ is very strong.
Although no proof is provided, it is claimed in  \cite[Example 4.20]{FYZ2} that it holds
whenever  $m=1$ and $a_2 : \mathcal{E}_2 \to \sigma^* \mathcal{E}_2^\vee$ is injective.
 There is circumstantial evidence that properness may also hold for some middle codimension special cycles on $\Sht^r_{\mathrm{U}(4)}$.  For $m>2$ there is no expectation that any  middle codimension special cycles on $\Sht^r_{\mathrm{U}(2m)}$ are proper.
 The point we wish to convey is that the above conjecture predicts that \emph{every} instance of a proper special cycle in middle dimension has its own  higher derivative  intersection formula, and so  even these low-dimensional cases are of interest.

\begin{remark}
Conjecture \ref{conj:GKZ} is perhaps less like the Gross-Zagier theorem and more like a hybrid of the arithmetic Rallis inner product formula and the Gross-Kohnen-Zagier theorem.
An analogue of it on quaternionic Shimura curves over $\Q$ can be deduced easily from \cite[Corollary 1.0.7]{KRY}.
\end{remark}

Our  second main result  is the proof of the above conjecture  in the simplest case, stated as part (4) of Theorem \ref{thm:final} in the body of the text.
This result is conditional on Conjecture \ref{conj:modularity}, which is needed to even define the arithmetic theta lift \eqref{arith theta}.

\begin{theorem}\label{thm:GKZ}
Suppose $\mathcal{E}_2$ is a line bundle on $X'$ (so $m=1$ and $n=2$) and $a_2\colon \mathcal{E}_2\iso \sigma^*\mathcal{E}_2^\vee$ is a hermitian isomorphism.  The naive special cycle
$
\cZ^r_{\mathcal{E}_2}(a_2) \to \Sht^r_{\mathrm{U}(2)}
$
is proper over $k$, and  the equality \eqref{GKZ} holds.
\end{theorem}

Proving  Theorems \ref{thm:simple duplication} and \ref{thm:GKZ} under the weaker hypothesis that the hermitian morphism $a_2\colon \mathcal{E}_2 \to \sigma^*\mathcal{E}_2^\vee$ is injective might be within reach.
  Even more interesting would be to find examples of proper special cycles of middle codimension on $\Sht^r_{\mathrm{U}(4)}$, and extend our results to that setting.


\subsection{Notation}
\label{ss:notation}


The  double cover $X'\to X$ over $k$ fixed above remains fixed through the paper,  $\sigma \in \Aut(X'/X)$ is its nontrivial automorphism, and the nontrivial automorphism of $F'=k(X')$ over $F=k(X)$ is denoted the same way.
Denote by
 \begin{equation}\label{quadratic character}
\eta =\eta_{F'/F}\colon \A_F^\times \to \{\pm 1\}
\end{equation}
 the associated  quadratic character.

Let $\omega_X$ be the sheaf of K\"ahler differentials on $X$.
By the assumption that $X'\to X$ is \'etale, its pullback is the sheaf of K\"ahler differentials  $\omega_{X'}$ on $X'$.
Denote by  $\omega_F$ the stalk of $\omega_X$ at the generic point of $X$.  In other words, $\omega_F$ is the $1$-dimensional $F$ vector space of rational K\"ahler differentials on $X$.

 As in \cite{FYZ1}, for any vector bundle $\mathcal{E}$ on $X'$ we denote by
\[
\mathcal{E}^\vee =  \underline{\Hom}_{\co_{X'}} ( \mathcal{E} , \omega_{X'})
\]
 the \emph{Serre dual} of $\mathcal{E}$.
 A \emph{hermitian morphism} $a\colon\mathcal{E} \to \sigma^* \mathcal{E}^\vee$  is  a morphism (in the category of coherent sheaves on $X'$) satisfying the hermitian condition $\sigma^* a^\vee = a$.

If $R$ is an $F$-algebra we abbreviate $R'=R\otimes_F F'$.
If  $L$ is a one-dimensional $F$-vector space  (e.g.~$\omega_F$),  denote by
\[
\Herm_n(R,L) \subset M_n( L \otimes_F R')
\]
 the $R$-submodule of  hermitian matrices.  In particular,
 \[
 \Herm_n(R) \define \Herm_n(R,F)
 \]
  is the usual $R$-module of $n\times n$ hermitian matrices with entries in $R'$.

For any group scheme $G$ over $F$ we  abbreviate $[G] = G(F) \backslash G(\A_F)$.  In particular, this applies to the scheme
  $\Herm_n$ over $F$ with functor of points $R\mapsto \Herm_n(R)$.


\subsection{Acknowledgements}


Yongyi Chen thanks Keerthi Madapusi and Tony Feng for helpful conversations.

Ben Howard thanks Jan Bruinier, who  suggested (some years ago) that  the integral representations of  \cite{PSRb} might be used  to obtain interesting Gross-Zagier style intersection formulas. This suggestion was the inspiration for the present work.
He also thanks Tony Feng, Aaron Pollack, Chris Skinner, and Wei Zhang for helpful conversations during the SLMath/MSRI semester  \emph{Algebraic cycles, $L$-values, and Euler systems}, where part of this work was carried out.


\subsection{Statements and declarations}


The authors have no competing interests to declare.
The preparation of this manuscript did not involve the generation or analysis of any datasets.


\section{Eisenstein series on  unitary groups}
\label{s:automorphic}


This section  begins by establishing some basic notation for quasi-split unitary groups, and the Siegel Eisenstein series on them.
Once that is done we prove two results.

The first, Theorem \ref{thm:duplication}  is a modest generalization of the famous doubling formula of  Piatetski-Shapiro and Rallis \cite{PSRa},  providing the link between Siegel Eisenstein series and base-change $L$-functions.  This will be needed in the proof of Theorem \ref{thm:final}.

The second, Proposition \ref{prop:genus drop},  is  an explicit formula relating singular Fourier coefficients of an Eisenstein series on a unitary group  of rank $4$ to non-singular Fourier coefficients of an Eisenstein series on a unitary group  of  rank $2$.
This is one of the main ingredients in the proof of Theorem \ref{thm:intersection}.


\subsection{The quasi-split unitary group}
\label{ss:thegroup}


For  an integer $n\ge 1$, abbreviate
\[
w_n = \begin{pmatrix}   & I_n \\ -I_n & \end{pmatrix} \in \GL_{2n}(F).
\]
We  endow the space of column vectors  $W_n=(F')^{2n}$ with the
standard skew-hermitian form $h_n(x,y) = {}^tx  \cdot  w_n \cdot    \sigma(y)$.
The associated rank $2n$ quasi-split unitary group over $F$ is denoted
\[
H_n =  \mathrm{U}(W_n) \subset \mathrm{Res}_{F'/F} \GL_{2n}.
\]

The  \emph{standard Siegel parabolic}
$
P_n \subset H_n
$
is the subgroup of matrices whose lower left $n\times n$ block vanishes.
Its unipotent radical is denoted
\[
N_n  = \left\{
n(b) = \begin{pmatrix}
I_n & b \\ & I_n
\end{pmatrix}  : b \in \Herm_n
\right\}
\]
while its Levi factor is denoted
\[
M_n = \left\{
m(\alpha) = \begin{pmatrix}
\alpha  &  \\ & \sigma( {}^t \alpha^{-1} )
\end{pmatrix}  : \alpha \in \mathrm{Res}_{F'/F}\GL_n
\right\} .
\]
The  \emph{standard compact open subgroup}  of $H_n(\A_F)$ is
$K_n = \prod_v K_{n,v}$, where for any place $v$ of $F$ we set
\[
K_{n,v} = H_n(F_v) \cap \GL_{2n}(\co_{F'_v}).
\]

\begin{remark}\label{rem:measures}
The Haar measure on $H_n(\A_F)$ is always normalized so that the standard compact open $K_n$ has volume $1$, and
$[H_n]$ is given the induced quotient measure.
We give  $\Herm_n(\A_F) \iso N(\A_F)$  the self-dual Haar measure; explicitly, this is the one for which the compact open subgroup
\[
\prod_v \Herm_n( \co_{F_v}   ) \subset \Herm_n(\A_F)
\]
 of hermitian matrices with  integral entries has volume
$q^{-  n^2  \deg(\omega_X) /2 } $.  The induced quotient measure on $[\Herm_n] \iso [N]$  has volume $1$.
\end{remark}


\subsection{Fourier coefficients}
\label{ss:fourier}


Fix   a nontrivial additive character $\psi_0\colon k \to \C^\times$.   As in \cite[\S 2.2]{FYZ1},
we use the $k$-linear \emph{residue map}
$
\mathrm{Res}\colon\omega_F \to k
$
to define  a canonical pairing
\[
\langle \cdot , \cdot \rangle\colon \Herm_n(\A_F, \omega_F) \times \Herm_n(\A_F) \to k
\]
 by $\langle T,b\rangle = \mathrm{Res}( - \mathrm{Tr}(Tb))$.  Its composition with $\psi_0$ is denoted
\[
\langle \cdot , \cdot \rangle_{\psi_0}\colon\Herm_n(\A_F, \omega_F) \times \Herm_n(\A_F) \to \C^\times .
\]

  Any automorphic form $f\in \mathcal{A}(H_n)$ has a Fourier expansion
\begin{equation}\label{general fourier}
f(g) = \sum_{ T\in \Herm_n(F,\omega_F) } f_T(g) ,
\end{equation}
in which the $T$-coefficient is
\[
f_T(g)  = \int_{[  \Herm_n ]  }
f ( n(b ) g )  \,  \langle T,b\rangle_{\psi_0}   \, db,
\]
with Haar measure as in Remark \ref{rem:measures}.

Suppose $f \in \mathcal{A}(H_n)$ is right invariant under $K_n$.
By the Iwasawa decomposition, $f$ is determined by its restriction to $P_n(\A_F) \subset H_n(\A_F)$, and hence is determined by the collection of functions
$
\{ f_T|_{M_n(\A_F)} \}_{ T\in \Herm_n(F , \omega_F)}.
$
As explained in  \cite[\S 2.6]{FYZ1},  this gives rise to a collection of \emph{geometric Fourier coefficients of $f$}:
 complex numbers $f_{ ( \mathcal{E} ,a)} $ indexed by pairs $(\mathcal{E},a)$ consisting of a rank $n$ vector bundle $\mathcal{E}$ on $X'$ and a hermitian morphism  $a\colon \mathcal{E}  \to \sigma^*\mathcal{E}^\vee.$

The precise definition of $f_{ ( \mathcal{E},a) }$ is as follows.
The generic fiber of $\mathcal{E}$ is a free $F'$-module of rank $n$, and upon choosing an isomorphism
$
 \mathcal{E}_{F'} \iso (F')^n
$
the hermitian morphism   $a$ is given by a hermitian matrix $T \in  \Herm_n(F , \omega_F)$.
For every place $v$ of $F'$ there is an $\alpha_v \in \GL_n(F'_v)$ such that the completed stalk of $\mathcal{E}$ at $v$ is identified with the lattice $\alpha_v \cdot \co_{F'_v}^n \subset (F_v')^n$.
Setting  $\alpha=\prod_v \alpha_v \in \GL_n(\A_{F'})$,  we  define
\[
f_{ ( \mathcal{E} ,a)} = f_T( m(\alpha)).
\]

\begin{remark}\label{rem:vb identification}
We identify the set of rank $n$ vector bundles on $X'$ with the double quotient
\[
\GL_n(F') \backslash \GL_n(\A_{F'})  /  \prod_v  \GL_n ( \co_{F'_n} )
\]
by sending $\mathcal{E} \mapsto \alpha$ with $\alpha$ as above.  In particular,
\[
q^{ \deg( \det(\mathcal{E})) } = | \det(\alpha) |_{F'} .
\]
\end{remark}


\subsection{Siegel Eisenstein series}
\label{ss:eisenstein}


Fix an unramified   Hecke character  $\chi\colon  \A_{F'} ^\times \to \C^\times$, and denote by $\chi_0$ its restriction to $\A_F^\times$.
We always assume
\begin{equation}\label{hecke restriction}
\chi_0= 1 \quad \mbox{or} \quad \chi_0 = \eta ,
\end{equation}
where $\eta$ is the quadratic character  \eqref{quadratic character}.
We often use the isomorphism
\[
\mathrm{Pic}(X') \iso  (F')^\times \backslash \A_{F'}^\times / \prod_v \co_{F'_v}^\times
\]
of Remark \ref{rem:vb identification} to view  $\chi$ as a character
$
\chi\colon \mathrm{Pic}(X') \to \C^\times.
$

\begin{remark}\label{rem:hecke confusion}
We will follow the discussion of \cite{FYZ1} for Eisenstein series on $H_n(\A_F)$, but warn the reader that
 throughout \emph{loc.~cit.} it is assumed that the Hecke character $\chi$ satisfies $\chi_0= \eta^n$.
 Thus some of the formulas of \emph{loc.~cit.} require small modifications to account for our weaker assumption  \eqref{hecke restriction}.
This weaker assumption   is the only one imposed in \cite{Tan}, and everything we need to know about Eisenstein series can be deduced from the results found there.
\end{remark}

For a complex variable $s$, denote by
\[
 I_{n}(s,\chi) =  \mathrm{Ind}_{P_{n} (\A_F)}^{H_{n}(\A_F)}(\chi |\cdot|_{F'} ^{s+\frac{n}{2}} )
\]
the unnormalized smooth  induction.
Here we are viewing both $\chi$ and $|\cdot|_{F'} $ as  characters of $P_{n}(\A_F)$ by composing them with the homomorphism
 \begin{equation}\label{parabolic character}
P_{n} (\A_F) \to \A_{F'}^\times
\end{equation}
sending an element of $P_{n} (\A_F) \subset \GL_{2n}(\A_{F'})$ to  the determinant of its upper left $n\times n$ block.

Given any $\Phi_s  \in I_n(s,\chi)$ with $\mathrm{Re}(s) > n/2$, the summation
\[
E (g , \Phi_s ) = \sum_{ \gamma \in P_{n}(F) \backslash H_{n}(F) } \Phi_s( \gamma g  )
\]
is absolutely convergent  and  defines a Siegel Eisenstein series on $H_{n}(\A_F)$.
Suppose $T\in \Herm_n(F,\omega_F)$ is nonsingular, in the sense that  $\det(T)\neq 0$ after fixing some trivialization $\omega_F\iso F$.
As in \cite[\S 2.2]{FYZ1},  modified to account for the choice of Haar measure in Remark \ref{rem:measures},  the  corresponding Fourier coefficient is
\begin{equation}\label{whit factor}
E_T(g,\Phi_s) =  \int_{  \Herm_n( \A_F) }
\Phi_s \big( w_n^{-1} n(b ) g  \big)  \,   \langle T,b \rangle_{\psi_0}  \, db .
\end{equation}

There is a unique standard section $s\mapsto \Phi_s^\circ \in I_n(s,\chi)$  satisfying  $\Phi_s^\circ(k)=1$ for all $k\in K_n$.  We call this the \emph{normalized spherical section}, and denote the associated  \emph{spherical Eisenstein series}  by
\[
E(g, s,\chi) = E(g,\Phi_s^\circ).
\]

The standard $L$-function  $L(s,\eta^i \chi_0 )$  satisfies the functional equation
\begin{equation}\label{L functional}
q^{ \frac{s}{2} \deg(\omega_X) }  L(s,\eta^i \chi_0 ) =   q^{ \frac{1-s}{2} \deg(\omega_X) }    L(1-s,\eta^i \chi_0 ).
\end{equation}
The product of $L$-functions
\[
\mathscr{L}_{n}(s,\chi_0) =  \prod_{i=1}^n  L(2s+i, \eta^{i-n} \chi_0 )
\]
 agrees with the $\mathscr{L}_n(s)$ defined in \cite[\S 2.6]{FYZ1} when $\chi_0 = \eta^n$.

Because of the assumption \eqref{hecke restriction},  there is an intertwining operator
\[
M_n(s)\colon I_n(s,\chi) \to I_n(-s,\chi)
\]
 defined for  $\mathrm{Re}(s) > n/2$ by
\begin{equation}\label{intertwining}
(M_n (s)\Phi_s) ( g) =    \int_{  \Herm_n(\A_F)   }
  \Phi_{s} \left( w_n^{-1}   n(b)    g  \right)  \, d b .
  \end{equation}
By combining the  local calculation of \cite[Proposition 2.1]{Tan}, as simplified on page 170 of \emph{loc.~cit.},  with the global function equation \eqref{L functional}, one finds that  the unramified spherical section satisfies
\begin{equation}\label{spherical intertwining}
  M_n(s) \Phi_s^\circ =    q^{     -  2ns  \deg(\omega_X)   }
     \frac{  \mathscr{L}_{n}(-s,\chi_0) }  { \mathscr{L}_{n}(s,\chi_0)   }   \cdot  \Phi^\circ_{-s} .
\end{equation}
It follows that $E(g,s,\chi)$   has meromorphic continuation, and that the renormalized Eisenstein series
\begin{equation}\label{renormalized eisenstein}
\widetilde{E}(g,s,\chi) \define  q^{  ns \deg ( \omega_X)  }    \cdot    \mathscr{L}_{n}(s,\chi_0)  \cdot  E ( g ,s,\chi )
\end{equation}
of \cite[\S 9.5]{FYZ2} satisfies the functional equation
\begin{equation}\label{eis functional}
\widetilde{E}(g,s,\chi)=\widetilde{E}(g,-s,\chi).
\end{equation}

For nonsingular $T$, the integral \eqref{whit factor}  determined by the normalized spherical section $\Phi_s^\circ$ can be expressed as products of local representation densities.  Hence the same is true of the geometric Fourier coefficient $E_{( \mathcal{E},a)}(s,\chi)$ indexed by a rank $n$ vector bundle $\mathcal{E}$ on $X'$ and an \emph{injective} hermitian morphism $a\colon \mathcal{E} \to \sigma^* \mathcal{E}^\vee$.  We only need the simplest case of these formulas, in which $a$ is an isomorphism.

\begin{proposition}\label{prop:unr eisenstein coefficient}
If  $\mathcal{E}$ is a rank $n$ vector bundle on $X'$, and $a\colon\mathcal{E} \iso \sigma^* \mathcal{E}^\vee$ is a hermitian isomorphism, then
\[
 q^{  ns \deg ( \omega_X)  }    \cdot    \mathscr{L}_{n}(s, \chi_0)  \cdot  E_{( \mathcal{E},a)} ( s,\chi )
 = \chi(\det(\mathcal{E})).
\]
\end{proposition}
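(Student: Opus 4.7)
The strategy is to reduce the global computation to a product of local Whittaker integrals, then apply the standard local calculation for unimodular hermitian matrices.

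First I would expand the geometric Fourier coefficient via the definition of \S \ref{ss:fourier}. Choose a trivialization $\mathcal{E}_{F'}\iso (F')^n$ carrying $a$ to an invertible $T \in \Herm_n(F,\omega_F)$, and let $\alpha = (\alpha_v) \in \GL_n(\A_{F'})$ with $\alpha_v\co_{F'_v}^n$ representing the completed stalk of $\mathcal{E}$. Then $E_{(\mathcal{E},a)}(s,\chi) = E_T(m(\alpha),\Phi_s^\circ)$, and since $\Phi_s^\circ = \prod_v \Phi_{s,v}^\circ$ is unramified everywhere, the integral \eqref{whit factor} factors as
\[
E_T(m(\alpha),\Phi_s^\circ) = \prod_v W_{T_v,v}(m(\alpha_v),\Phi_{s,v}^\circ),
\]
where $W_{T_v,v}$ is the analogous local integral over $\Herm_n(F_v)$.

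Next I would reduce each local factor to the ``base point'' case by changing variables. Using the identity $w_n^{-1}m(\alpha_v) = m(\sigma({}^t\alpha_v^{-1}))w_n^{-1}$ together with the conjugation action $m(\alpha_v)^{-1} n(b) m(\alpha_v) = n(\alpha_v^{-1} b\, \sigma({}^t\alpha_v^{-1}))$, and then substituting $b \mapsto \alpha_v b\, \sigma({}^t\alpha_v)$, one extracts a factor of $\chi_v(\det\alpha_v)$ (via the induction character on $P_n$) together with a Jacobian $|\det\alpha_v\cdot\sigma(\det\alpha_v)|^{?}$, leaving behind the integral $W_{T_v',v}(1,\Phi_{s,v}^\circ)$ with
\[
T_v' = \sigma({}^t\alpha_v)\,T_v\,\alpha_v.
\]
The key observation is that the assumption that $a:\mathcal{E}\iso\sigma^*\mathcal{E}^\vee$ is a hermitian \emph{isomorphism} translates locally to the statement that $\alpha_v\co_{F_v'}^n$ is a self-dual lattice under $T_v$, i.e.\ $T_v'$ lies in $\Herm_n(\co_{F_v},\omega_{\co_{F_v}})$ with unit determinant.

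The main technical input is the unramified local formula: for such unimodular $T_v'$ and the spherical section, a standard computation (as in \cite{Tan}, or as in \S2 of \cite{FYZ1} with the twist by $\chi$ tracked under hypothesis \eqref{hecke restriction}) gives
\[
W_{T_v',v}(1,\Phi_{s,v}^\circ) \;=\; c_v \cdot \prod_{i=1}^{n} L_v(2s+i,\,\eta^{i-n}\chi_0)^{-1},
\]
where $c_v$ is the local measure constant from Remark \ref{rem:measures}. Multiplying these formulas together and combining with the character and Jacobian factors from the previous step, the product of the $c_v$ contributes $q^{-n^2\deg(\omega_X)/2}$, the Jacobians combined with the induction weights contribute $q^{-ns\deg(\omega_X)}$ times $|\det\alpha|_{\A_{F'}}^{?}$ (which is $q^{\deg(\det\mathcal{E})\cdot(\cdots)}$ by Remark \ref{rem:vb identification}), and the product of $\chi_v(\det\alpha_v)$ assembles to $\chi(\det\mathcal{E})$. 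Combining with $\mathscr{L}_n(s,\chi_0) = \prod_{i=1}^n L(2s+i,\eta^{i-n}\chi_0)$ yields exactly the claimed identity.

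The only real obstacle is bookkeeping: the measure conventions of Remark \ref{rem:measures}, the unnormalized induction in \S \ref{ss:eisenstein}, and the $q^{\deg(\omega_X)}$ powers that appear in the rescaling \eqref{renormalized eisenstein} all need to match up. Once Hypothesis \eqref{hecke restriction} is fed into the intertwining/Whittaker computation (where the paper \cite{FYZ1} assumes $\chi_0=\eta^n$, so the modifications of Remark \ref{rem:hecke confusion} are needed), the identity falls out as a purely formal consequence of the functional equation \eqref{L functional} and the spherical intertwining formula \eqref{spherical intertwining}.
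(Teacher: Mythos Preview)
Your approach is correct and is essentially what the paper's proof does, only spelled out: the paper simply cites \cite[Theorem 2.8]{FYZ1} together with the unramified local Whittaker computation of \cite[Proposition 3.2]{Tan}, and your sketch (factor into local Whittaker integrals, change variables to reduce to the unimodular case, apply Tan's formula, and collect constants) is exactly the content of those references.

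One small correction: your final paragraph invokes the functional equation \eqref{L functional} and the intertwining formula \eqref{spherical intertwining} as the mechanism by which ``the identity falls out.'' These are not needed here. The intertwining operator governs the constant term ($T=0$) and the global functional equation of the Eisenstein series, whereas for a \emph{nonsingular} $T$ the Whittaker integral \eqref{whit factor} is computed directly as a product of local densities, with no appeal to $M_n(s)$ or to \eqref{L functional}. The only genuine input beyond bookkeeping is Tan's explicit evaluation of the local spherical Whittaker function at a unimodular hermitian matrix, which already produces the inverse local $L$-factors you wrote down.
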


\begin{proof}
This is a special case of \cite[Theorem 2.8]{FYZ1}, slightly extended
(see Remark \ref{rem:hecke confusion})  using the  calculation of unramified local Whittaker functions found in \cite[Proposition 3.2]{Tan}.
\end{proof}


\subsection{The duplication formula}


In this subsection we  assume that $n=2m$ is even.
We continue to work with an unramified   Hecke character  $\chi\colon    \A_{F'} ^\times \to \C^\times$, but now assume $\chi_0 = 1$ instead of the weaker \eqref{hecke restriction}.

Consider the Eisenstein series
\[
E (g , s ,\chi ) = \sum_{ \gamma \in P_{n}(F) \backslash H_{n}(F) } \Phi^\circ_s( \gamma g  )
\]
 on $H_{n}(\A_F)$ associated to the normalized spherical section $\Phi_s^\circ \in I_{n}(s,\chi)$.
 We are interested in its pullback via the  \emph{standard embedding}
$
i_0\colon H_m \times H_m \to H_{n}
$
defined by
\begin{equation}\label{standard embedding}
i_0 \left(
\begin{pmatrix}  a_1 & b_1 \\ c_1 & d_1 \end{pmatrix} ,
\begin{pmatrix}  a_2 & b_2 \\ c_2 & d_2 \end{pmatrix}
\right)
=
\begin{pmatrix}
a_1 &   & b_1  &   \\
 & a_2 &   & b_2   \\
 c_1  &   & d_1 &    \\
    &  c_2 &   &  d_2
    \end{pmatrix}.
\end{equation}

\begin{definition}
 The \emph{doubling kernel} is  the  two-variable automorphic form
\begin{equation}\label{Dkernel}
D( g_1, g_2 , s,\chi )  =  E(i_0 (g_1,g_2), s,\chi )  \in \mathcal{A}( H_m\times H_m).
\end{equation}
For any $T_2 \in \Herm_m(F,\omega_F)$ define  the \emph{new way kernel}
\begin{equation}\label{new way}
 D_{\square, T_2}(g_1,g_2,s ,\chi )    =    \int_{ [ \Herm_m ] }   D (  g_1, n(b)  g_2   ,s ,\chi)
   \, \langle T_2 , b \rangle_{\psi_0}   \, d b
\end{equation}
 as  the $T_2$-coefficient of the doubling kernel with respect to the variable $g_2$.
\end{definition}

As in \eqref{general fourier},  any  automorphic form $f \in  \mathcal{A}( H_m \times H_m )$ in two variables has a double Fourier expansion
\[
f( g_1,g_2 )= \sum_{ T_1 , T_2 \in \Herm_m (F,\omega_F) }  f_{T_1,T_2}(g_1,g_2).
\]
The double Fourier coefficients of the doubling kernel \eqref{Dkernel}
are related to the Fourier coefficients of $E(g, s,\chi )$ by
\[
D_{T_1,T_2}(g_1, g_2 , s,\chi)  =
\sum_{
T = \left(\begin{smallmatrix} T_1 & * \\ * & T_2 \end{smallmatrix} \right) \in \Herm_{n}(F,\omega_F)
 }
 E_T( i_0( g_1,g_2)  , s ,\chi ),
\]
and so the new way kernel \eqref{new way} has   Fourier expansion
\begin{align*}
 D_{\square, T_2}(g_1,g_2,s ,\chi )     &   =
  \sum_{ T_1   \in \Herm_m (F) }  D_{T_1,T_2}(g_1,g_2,s  ,\chi)    \\
&   =
 \sum_{
T = \left(\begin{smallmatrix} *  & * \\ * & T_2 \end{smallmatrix} \right) \in \Herm_{n}(F,\omega_F)
 }
 E_T( i_0( g_1,g_2)  , s ,\chi ).
 \end{align*}

 As in \S \ref{ss:fourier}, we can take geometric Fourier coefficients of the  doubling kernel  \eqref{Dkernel}  in the second variable.
 The result is, for every pair $(\mathcal{E}_2,a_2)$ consisting of   a rank $m$ vector bundle $\mathcal{E}_2$ on $X'$ and a hermitian map $a_2\colon\mathcal{E}_2 \to \sigma^* \mathcal{E}_2^\vee$,  an automorphic form
 \begin{equation}\label{geometric new way kernel}
 D_{\square, ( \mathcal{E}_2,a_2)  }(g_1 ,s,\chi) \in \mathcal{A}(H_m)
 \end{equation}
 in the variable $g_1$ whose geometric Fourier coefficients are  related to those of the Eisenstein series  $E(g,s,\chi)$ by
 \begin{equation}\label{eq:geometric doubling coefficients}
D_{  (\mathcal{E}_1,a_1) , ( \mathcal{E}_2,a_2) }  ( s,\chi)
=
\sum_{  a = \left( \begin{smallmatrix} a_1 & * \\ * & a_2 \end{smallmatrix} \right) }
E_{ (\mathcal{E} , a ) } ( s , \chi).
\end{equation}
On the right hand side $\mathcal{E} = \mathcal{E}_1\oplus \mathcal{E}_2$, and
 the sum is over all hermitian maps $a\colon\mathcal{E} \to \sigma^*\mathcal{E}^\vee $ for which  the composition
\[
 \mathcal{E}_i  \map{\mathrm{inc.}} \mathcal{E}_1 \oplus \mathcal{E}_2 \map{a}  \sigma^*\mathcal{E}_1^\vee \oplus  \sigma^*\mathcal{E}_2^\vee \map{\mathrm{proj.}}   \sigma^*\mathcal{E}_i^\vee
\]
 agrees with $a_i$ for both $i \in \{1,2\}$.

\begin{theorem}[The duplication formula]\label{thm:duplication}
If  $f\in \pi$ is a $K_m$-fixed vector in an irreducible cuspidal automorphic representation
$\pi \subset \mathcal{A}(H_m)$, then for all $g_2 \in H_m(\A_F)$ we have
\begin{align*}
  \int_{ [H_m]   } D( g_1 , g_2   , s ,\chi )   f(g_1) \, d g_1
 =   \frac{  L( s + 1/2  ,\mathrm{BC}(\pi) \otimes \chi  )   }{\mathscr{L}_{n}(s, \chi_0)   }
 \cdot   \chi(\det( g_2 ))  \cdot f(g_2^\dagger) .
\end{align*}
Here  $g\mapsto g^\dagger$ is the automorphism of  $H_m(\A_F)$  defined on block matrices by
\[
\begin{pmatrix}  a  & b  \\ c & d \end{pmatrix}^\dagger
=
\begin{pmatrix}  a  & -b  \\ -c & d \end{pmatrix},
\]
where each block $a,b,c,d$ has size $m\times m$.
The $L$-function on the right hand side is the twisted base-change $L$-function as defined in \cite{Clo}. See also \cite{Min}.
\end{theorem}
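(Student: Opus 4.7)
The plan is to prove the duplication formula by adapting the Piatetski-Shapiro--Rallis doubling method to the one-variable (kernel) setting, where $g_2$ is kept as a free variable rather than integrated against $\overline{f(g_2)}$.

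\textbf{Step 1 (Unfolding).} Substitute the definition of $E(g,s,\chi)$ and interchange sum with integral:
\[
\int_{[H_m]} D(g_1,g_2,s,\chi) f(g_1)\,dg_1 = \int_{[H_m]} \sum_{\gamma \in P_n(F)\backslash H_n(F)} \Phi_s^\circ(\gamma \cdot i_0(g_1,g_2)) f(g_1)\,dg_1.
\]
Organize the sum over $\gamma$ according to orbits of the right action of $i_0(H_m(F) \times \{1\})$ on $P_n(F)\backslash H_n(F)$, then collapse each orbit sum with the integration over $[H_m]$ into an integral over $S_\gamma(F) \backslash H_m(\A_F)$, where $S_\gamma$ is the stabilizer of the orbit representative. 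For each non-open orbit, the stabilizer $S_\gamma$ contains a nontrivial unipotent subgroup inside $P_n$, so integrating the cuspidal $f$ over the corresponding quotient yields zero. Only the open orbit contributes.

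\textbf{Step 2 (Open orbit and the $\dagger$-twist).} Choose an explicit representative $\gamma_0$ for the open orbit (the usual choice in the doubling method is built from $w_n$ together with the anti-diagonal intertwining the two copies of $W_m$). A direct computation then shows
\[
\gamma_0 \cdot i_0(g_1,g_2) = \gamma_0 \cdot i_0(g_1 g_2^{\dagger,-1},1) \cdot i_0(1,g_2),
\]
where the $\dagger$-involution records how conjugation by $\gamma_0$ identifies the two $H_m$-factors; the precise identification forces the asymmetry $g_2 \leftrightarrow g_2^\dagger$. Exploiting left $K_n$-invariance and the quasi-character transformation law $\Phi_s^\circ(p\cdot g) = \chi(\det_\text{upper})(p) |\cdot|^{s+n/2}(p) \Phi_s^\circ(g)$ on $P_n(\A_F)$, one rewrites the open-orbit contribution as
\[
\chi(\det g_2) \int_{H_m(\A_F)} \Phi_s^\circ(\gamma_0 \cdot i_0(g,1)) f(g \cdot g_2^\dagger)\,dg,
\]
after the change of variable $g_1 = g\cdot g_2^\dagger$ (all of $H_m(\A_F)$ since the stabilizer of the open orbit is trivial). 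This explains the appearance of both $\chi(\det g_2)$ and $f(g_2^\dagger)$ once the integrand is factored.

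\textbf{Step 3 (Factorization and unramified local computation).} Since $\Phi_s^\circ$ is a pure tensor of spherical vectors and $f$ is $K_m$-fixed inside an irreducible $\pi = \bigotimes_v \pi_v$, the remaining integral factors as a product of local zeta integrals
\[
Z_v(s,\chi_v,\pi_v)\;=\;\int_{H_m(F_v)} \Phi_{s,v}^\circ(\gamma_0\cdot i_0(g,1))\,\langle \pi_v(g) f_v,\widetilde{f}_v\rangle_{\pi_v}\,dg,
\]
normalized by the unique $K_m$-fixed functional on $\pi_v$. These are exactly the local doubling zeta integrals of Piatetski-Shapiro--Rallis; the unramified calculation (see \cite{PSRa}, or its explicit form in \cite{Tan}) yields
\[
Z_v(s,\chi_v,\pi_v) = \frac{L(s+1/2,\mathrm{BC}(\pi_v)\otimes \chi_v)}{\prod_{i=1}^{n} L(2s+i,\eta_v^{i-n}\chi_{0,v})}.
\]
Taking the product over all places (all of which are unramified by assumption) gives the ratio $L(s+1/2,\mathrm{BC}(\pi)\otimes\chi)/\mathscr{L}_n(s,\chi_0)$ stated in the theorem.

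\textbf{Main obstacle.} The routine but error-prone part is Step 2: pinning down the correct open-orbit representative $\gamma_0$ and propagating the characters $\chi$, $|\cdot|_{F'}^{s+n/2}$, and the measure normalizations of Remark \ref{rem:measures} through the change of variables so that the final formula carries precisely $\chi(\det g_2)$ and $f(g_2^\dagger)$ without extraneous powers of $q^{\deg(\omega_X)}$. Verifying that no spurious factors appear (in particular, that the combinatorics match the unramified computation of \cite{Tan}, whose Hecke-character conventions differ slightly from those of \cite{FYZ1} as noted in Remark \ref{rem:hecke confusion}) is where most of the bookkeeping lies; beyond that, the argument is a direct one-variable adaptation of the classical doubling method.
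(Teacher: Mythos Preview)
Your approach is the same as the paper's: unfold the Eisenstein series over $P_n(F)\backslash H_n(F)/H_m(F)$ (acting through the first factor), kill the non-open orbits by cuspidality, and reduce the open-orbit term to the doubling zeta integral. The paper carries this out via the twisted embedding $i(g_1,g_2)=i_0(g_1,g_2^\dagger)$ and an explicit element $\delta\in H_n(F)\cap K_n$ representing the open orbit; your $\gamma_0$ plays the role of $\delta$.

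Two points need tightening. The identity you write in Step~2 is not correct as stated: what is actually used is $i_0(g_1,g_2)=i_0(g_2^\dagger,g_2)\cdot i_0\big((g_2^\dagger)^{-1}g_1,1\big)$, together with the fact that $\delta\, i_0(g_2^\dagger,g_2)\,\delta^{-1}\in P_n(\A_F)$ maps to $\det(g_2)$ under the character~\eqref{parabolic character} (and $|\det g_2|_{F'}=1$ for $g_2\in H_m$). More importantly, your Step~3 skips precisely the point the paper singles out as the only genuinely new content (Remark~\ref{rem:almost doubling}). After the change of variable one has
\[
\chi(\det g_2)\int_{H_m(\A_F)}\Phi_s^\circ\big(\delta\, i(g_1,1)\big)\,f(g_2 g_1)\,dg_1,
\]
and before any local factorization one must argue that, as a function of $g_2$, this lies in $\pi$ (it is a linear combination of right translates of $f$) and is $K_m$-fixed (since $\Phi_s^\circ$ is $K_n$-fixed and $i(K_m,K_m)\subset K_n$); only then does one-dimensionality of $\pi^{K_m}$ force it to equal $c(s)\,f(g_2)$, after which $c(s)$ is computed by pairing against the contragredient $\breve f$ to recover the usual doubling zeta integral. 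Your phrase ``normalized by the unique $K_m$-fixed functional'' gestures at this, but the argument should be made explicit, since without it the classical doubling computation only determines the integral up to an arbitrary form in $g_2$.
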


\begin{remark}\label{rem:almost doubling}
In the equality of Theorem \ref{thm:duplication}, if one takes the Petersson inner product of both sides against any cusp form  $\breve{f} \in \mathcal{A}(H_m)$ the resulting formula is the usual \emph{doubling formula} of Piatetski-Shapiro and Rallis \cite{PSRa}, extended to the unitary case by Li \cite{Li}; see also \cite{Liu}.
By varying the auxiliary form $\breve{f}$, the  duplication formula above would follow  immediately from the doubling formula if one knew a priori that the integral on the left hand side of Theorem \ref{thm:duplication} defined a \emph{cuspidal} automorphic form in the variable $g_2$.  Thus the only real new information in Theorem \ref{thm:duplication} is the cuspidality of the left hand side.
\end{remark}

Before giving the proof, we restate Theorem \ref{thm:duplication}  in the precise form in which we will use it.
If  the automorphic form $f (g) $ in Theorem \ref{thm:duplication} has  $T^\mathrm{th}$ Fourier coefficient $f_T$, then the automorphic form $f(g^\dagger)$ has $T^\mathrm{th}$ Fourier coefficient $f_{-T} (g^\dagger)$.  Thus taking the  $T_2$-coefficient in both sides of Theorem \ref{thm:duplication} yields the formula
 \begin{align}\label{new way integral}
 &  \int_{  [H_m]  }  D_{\square,T_2}( g_1, g_2   ,s ,\chi  )   f(g_1) \, d g_1  \\
&   =   \frac{  L( s +1/2  , \mathrm{BC}(\pi)  \otimes \chi )   }{  \mathscr{L}_{n}(s,\chi_0)    }
    \cdot   \chi(\det( g_2 ))  \cdot  f_{-T_2} (g_2^\dagger) . \nonumber
\end{align}
Equivalently, expressed in  the language of geometric Fourier coefficients,
 \begin{align}\label{geometric new way}
  &\int_{ [H_m ] }  D_{\square, ( \mathcal{E}_2,a_2) }( g_1   ,s,\chi )   f(g_1 ) \, d g_1  \\
    & =
     \frac{  L( s +1/2  , \mathrm{BC}(\pi)  \otimes \chi )   }{  \mathscr{L}_{n}(s,\chi_0)    }
   \cdot  \chi(\det(\mathcal{E}_2))    \cdot   f_{( \mathcal{E}_2 , -a_2)  }    .
          \nonumber
\end{align}

\begin{remark}\label{rem:new way}
Extending ideas of  B\"ocherer \cite{Bo1,Bo2} from symplectic groups to unitary groups,
Ikeda \cite{Ike} showed that the new way kernel \eqref{new way}  is a linear combination of Eisenstein series and theta series for the hermitian space determined by  $T_2$.
As explained in  \cite{GiSo},  if one substitutes this linear combination into the integral on the left hand side of \eqref{new way integral}  and sets $g_2=1$,  the resulting formula is essentially the \emph{new way integral}  of \cite{PSRb,Qin}.
This explains our choice of terminology for \eqref{new way}.
\end{remark}

Now we turn to the proof of Theorem \ref{thm:duplication}.
In addition to the standard embedding   \eqref{standard embedding}, we will make use of the twisted embedding
\begin{equation}\label{twisted embedding}
 i\colon H_m \times H_m \to H_n
\end{equation}
defined by
$i (g_1,g_2) =  i_0(g_1,g_2^\dagger)$.  Set
\[
\delta = \begin{pmatrix}
 & &  I_m &  \\
 &  I_m & &   \\
 - I_m & I_m  & &    \\
 &   & I_m &  I_m
\end{pmatrix} \in H_{n}(F) \cap K_{n} \subset \GL_{2n}(F')  .
\]

\begin{lemma}\label{lem:true double}
Consider the doubled space $W_n^\Delta=W_m\oplus W_m$ endowed with the skew-hermitian form $h_n^\Delta=h_m \oplus -h_m$.
There is an isomorphism
\[
H_n^\Delta \define U(W_n^\Delta)  \iso H_n
\]
identifying   the canonical  inclusion
$
 i^\Delta\colon H_m\times H_m \to   H_n^\Delta
$
with the twisted embedding \eqref{twisted embedding},
 and identifying   the stabilizer  $P_n^\Delta \subset H_n^\Delta$ of the diagonal Lagrangian
$
\{ (x,x)  : x\in W_m \} \subset W_n^\Delta
$
  with  the conjugate $\delta^{-1} P_n \delta \subset H_n$ of the  standard Siegel parabolic.
  \end{lemma}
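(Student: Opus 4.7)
The plan is to prove both assertions at once by exhibiting an explicit isometry $\phi \colon W_n \iso W_n^\Delta$ of hermitian $F'$-spaces, from which the required isomorphism $H_n \iso H_n^\Delta$ arises by conjugation. Writing column vectors $v \in W_n = (F')^{4m}$ as $v = (v_1,v_2,v_3,v_4)^t$ in blocks of size $m$, I will take
\[
\phi(v_1,v_2,v_3,v_4) = \bigl((v_1,v_3),\,(v_2,-v_4)\bigr) \in W_m \oplus W_m = W_n^\Delta .
\]
The minus sign in the fourth block is the essential ingredient; without it none of the three compatibilities below can hold simultaneously. A direct block-matrix unfolding of both sides gives
\[
h_n^\Delta(\phi(v),\phi(w)) = {}^tv_1\, \sigma(w_3) + {}^tv_2\, \sigma(w_4) - {}^tv_3\, \sigma(w_1) - {}^tv_4\, \sigma(w_2) = h_n(v,w),
\]
verifying that $\phi$ is an isometry.

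To check compatibility with the twisted embedding $i(g_1,g_2) = i_0(g_1,g_2^\dagger)$, I will apply $\phi$ to $i(g_1,g_2)\cdot v$ regarded as the explicit $4 \times 4$ block matrix acting on $(v_1,v_2,v_3,v_4)^t$. The $\dagger$-operation flips the signs of the off-diagonal blocks of $g_2$, and this flip exactly compensates for the minus sign in the fourth coordinate of $\phi$; the output is $(g_1 \cdot (v_1,v_3),\, g_2 \cdot (v_2,-v_4))$, which is the diagonal action of $(g_1,g_2)$ on $\phi(v)$. Hence $\phi \circ i(g_1,g_2) = i^\Delta(g_1,g_2) \circ \phi$, so the two embeddings agree under $\phi$.

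For the Siegel parabolics, I will first invert $\delta$ by linear algebra to obtain $\delta^{-1}(w_1,w_2,w_3,w_4)^t = (w_2-w_3,\, w_2,\, w_1,\, w_4-w_1)^t$, and restrict to the standard Lagrangian $\Lambda = \{(w_1,w_2,0,0)^t\}$, giving $\delta^{-1}\Lambda = \{(w_2, w_2, w_1, -w_1)^t : w_1,w_2 \in (F')^m\}$. Applying $\phi$ sends this to $\{((w_2,w_1),(w_2,w_1))\}$, which is precisely the diagonal Lagrangian in $W_n^\Delta$. Because $\Stab_{H_n}(\delta^{-1}\Lambda) = \delta^{-1}P_n\delta$ and stabilizers transport along $\phi$, this identifies $P_n^\Delta$ with $\delta^{-1}P_n\delta$ as required.

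I do not expect any serious obstacle: the lemma is essentially a bookkeeping exercise. The only mild subtlety is that a single isometry $\phi$ must simultaneously witness all three facts (isometry, $H_m \times H_m$-equivariance in the twisted form, and Lagrangian matching). This triple constraint pins down the minus sign in the fourth coordinate of $\phi$, and the explicit matrix $\delta$ fixed in the statement is then forced by the choice of $\phi$.
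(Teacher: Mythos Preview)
Your proof is correct and essentially identical to the paper's: your explicit isometry $\phi(v_1,v_2,v_3,v_4)=((v_1,v_3),(v_2,-v_4))$ is precisely the inverse of the basis isomorphism $W_n^\Delta \iso W_n$ that the paper writes down, and you have filled in the isometry, embedding-compatibility, and Lagrangian checks that the paper leaves to the reader.
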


\begin{proof}
Using the standard basis  $e_1,\ldots, e_m, f_1,\ldots, f_m \in (F')^{2m} = W_m$, we define a basis
 $e^\Delta_1,\ldots, e^\Delta_n,f^\Delta_1,\ldots, f^\Delta_n \in W_n^\Delta$  by
\begin{align*}
e^\Delta_i  = \begin{cases}
(e_i,0) & \mbox{if } 1\le i \le m \\
(0,e_{i-m}) & \mbox{if } m<i \le n
\end{cases}  \quad \mbox{and}\quad
f^\Delta_i  = \begin{cases}
(f_i,0) & \mbox{if } 1\le i \le m \\
(0,-f_{i-m}) & \mbox{if } m<i \le n .
\end{cases}
\end{align*}
The  $F'$-linear isomorphism $W_n^\Delta  \iso (F')^{2n} = W_n$ determined by this basis  is  an isometry, and the induced isomorphism
of unitary groups has the desired properties.
\end{proof}

\begin{proposition}\label{prop:duplication unfolding}
For any cuspidal automorphic form $f \in \mathcal{A}(H_m)$ and any standard section $\Phi_s \in I_n(s,\chi)$ we have the equality
\[
  \int_{ [H_m] } E( i  (g_1,g_2)  , \Phi_s )   f(g_1) \, d g_1  =   \int_{  H_m (\A_F)  } \Phi_s(   \delta i (   g_1 , g_2 )  ) f(g_1 ) \, d g_1
\]
for all $g_2\in H_m(\A_F)$.
\end{proposition}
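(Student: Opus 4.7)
The plan is to carry out the classical unfolding step of the Piatetski-Shapiro--Rallis doubling method, adapted to the unitary setting. First I would expand
\[
E(i(g_1,g_2),\Phi_s) = \sum_{\gamma \in P_n(F) \backslash H_n(F)} \Phi_s(\gamma i(g_1,g_2))
\]
and partition the sum into orbits of $(H_m \times H_m)(F)$ acting on the right via $i$. Using Lemma \ref{lem:true double}, I would identify $P_n(F) \backslash H_n(F)$ with the Lagrangian Grassmannian of $W_n^\Delta = W_m \oplus (-W_m)$; the $(H_m \times H_m)(F)$-orbits are then parametrized by a discrete invariant $r \in \{0,1,\ldots,m\}$ (essentially the dimension of the intersection of a representative Lagrangian with $W_m \oplus 0$). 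The unique open orbit has $r = 0$ and consists of graphs of isometries $W_m \to W_m$; a representative is the diagonal Lagrangian $W^\Delta$, which corresponds under the identification to $\gamma = \delta$ and whose stabilizer in $(H_m \times H_m)(F)$ is exactly the diagonal $\Delta H_m(F)$.

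Next I would compute the open-orbit contribution by taking coset representatives of the form $(h,1)$ with $h \in H_m(F)$ for $\Delta H_m(F) \backslash (H_m \times H_m)(F)$, so that the open-orbit sum becomes $\sum_{h \in H_m(F)} \Phi_s(\delta i(hg_1,g_2))$. Exploiting left $H_m(F)$-invariance of $f$ and applying the standard telescoping identity
\[
\int_{[H_m]} \sum_{h \in H_m(F)} F(hg_1)\,dg_1 = \int_{H_m(\A_F)} F(g_1)\,dg_1
\]
to $F(g_1) = \Phi_s(\delta i(g_1,g_2)) f(g_1)$ would produce exactly the right-hand side of the proposition.

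The remaining task is to show that each non-open orbit ($r \ge 1$) contributes zero once we integrate against $f$. For such an orbit I would choose a representative $\gamma_r$ whose $(H_m \times H_m)(F)$-stabilizer projects onto a subgroup of $H_m(F)$ containing the unipotent radical $N_r(F)$ of a proper $F$-parabolic of $H_m$ (essentially the stabilizer of an isotropic flag of type $r$). Unfolding the orbit sum and rearranging the outer integral against $f(g_1)$ would expose an inner constant-term integral $\int_{[N_r]} f(ng_1)\,dn$, which vanishes by cuspidality of $f$.

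The hard part will be the orbit-and-stabilizer analysis of the first step: classifying $(H_m \times H_m)(F)$-orbits on the Lagrangian Grassmannian of $W_n^\Delta$, describing each stabilizer explicitly, and verifying that every non-open stabilizer contains the unipotent radical of a proper parabolic of $H_m$ embedded in the way needed for cuspidality to kill the contribution. These facts are standard in the symplectic case \cite{PSRa} and were extended to the unitary setting by Li \cite{Li}; the proof would either cite or directly adapt those results.
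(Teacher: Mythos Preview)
Your overall architecture (unfold the Eisenstein series, isolate the main orbit, kill the rest by cuspidality) is right, and your computation of the open-orbit contribution is correct and matches the paper's. But the orbit decomposition you propose is not the one the paper uses, and your version has a gap in the treatment of the non-open orbits.

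You partition $P_n(F)\backslash H_n(F)$ by $(H_m\times H_m)(F)$-orbits, as in the standard doubling integral of \cite{PSRa,Li}. But in this proposition only $g_1$ is integrated; $g_2$ is a free parameter. Accordingly the paper unfolds with respect to the action of the \emph{first} $H_m$ alone, and its key lemma analyzes $H_m$-orbits on the Lagrangian Grassmannian: those $L$ with $L\cap V=0$ form a single $H_m$-orbit with trivial stabilizer (giving the right-hand side), and every other $L$ has an $H_m$-stabilizer that is negligible (sandwiched between a proper parabolic and its unipotent radical). The paper explicitly flags this as an ``upgrade'' of the $(H_m\times H_m)$-orbit analysis in \cite{PSRa}.

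Your non-open-orbit argument does not supply this upgrade. You say the $(H_m\times H_m)$-stabilizer of $\gamma_r$ \emph{projects} onto a subgroup of $H_m$ containing $N_r$. That is not what is needed: to extract the constant-term integral $\int_{[N_r]} f(ng_1)\,dn$ you need $N_r\times\{1\}$ to lie \emph{inside} the stabilizer, i.e.\ the intersection of the stabilizer with the first factor must contain $N_r$. Projection and intersection are different, and only the latter lets you pull $\Phi_s(\gamma_r i(n g_1,g_2))$ out of the $n$-integral. Moreover, a single representative $\gamma_r$ per $(H_m\times H_m)$-orbit is not enough: each such orbit decomposes into many $H_m$-orbits (one for each choice of $h_2$-translate), and you must check the intersection condition for all of them. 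This is straightforward once you use the triple description $(Y,Y',\alpha)$ from \cite{PSRa}---the $H_m$-stabilizer is $\{h:hY=Y,\ h=\id\text{ on }Y^\perp/Y\}$, visibly negligible when $Y\neq 0$---but it is genuinely additional to what \cite{PSRa,Li} prove, and your proposal should say so rather than cite those references as if they already contain it.
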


\begin{proof}
The proof  follows a similar line of reasoning as in  \cite{PSRa}, where it is shown that the two sides of the desired equality agree after taking the Petersson inner product against any cusp form in $\mathcal{A}(H_m)$.
See Remark \ref{rem:almost doubling}.

By mild abuse of notation, we denote by
\begin{equation}\label{twisted abuse}
H_m \times H_m' \subset H_n
\end{equation}
the image of the twisted embedding \eqref{twisted embedding}.
In other words, abbreviate $H_m=i(H_m,1)$ and $H_m' = i(1,H_m)$.
Using Lemma \ref{lem:true double}, one sees that  \eqref{twisted abuse}   is the stabilizer of an orthogonal splitting
\[
W_n= V \oplus V'
\]
 with $V$ isometric to $(W_m,h_m)$ and $V'$ isometric to $(W_m,-h_m)$.

Inserting the definition of the Eisenstein series and unfolding shows that
\begin{align} \label{duplicate unfolding}
&  \int_{ [H_m]   } E( i  (g_1,g_2)   ,\Phi_s ) \,  f(g_1) \, d g_1  \\
& =      \sum_{  \gamma \in P_n(F) \backslash H_n(F) / H_m(F)  }
\int_{ I_\gamma (F) \backslash H_m(\A_F)  } \Phi_s( \gamma i ( g_1,g_2)   ) \,  f(g_1) \, d g_1 \nonumber
\end{align}
where $I_\gamma = H_m \cap   \gamma^{-1} P_n \gamma$.
It follows from Lemma \ref{lem:true double} that $I_\delta=1$, and we are reduced to proving the vanishing of the integrals indexed by all $\gamma \neq \delta$.

In practice, this amounts to upgrading the analysis of right $(H_m \times H_m')$-orbits in $P_n\backslash H_n$ found in \cite[\S 2]{PSRa} to an analysis of the right $H_m$-orbits.
Say that a subgroup $I \subset H_m$ is \emph{negligible} if there exists a proper parabolic subgroup $Q \subset H_m$ with unipotent radical $N$ such that $N  \subset I  \subset Q.$

\begin{lemma}
Consider the action of the  subgroup $H_m \subset H_n$ on  the set of all Lagrangian subspaces $L \subset W_n$.
\begin{enumerate}
\item
The set of Lagrangians $L \subset W_n$ satisfying $L\cap V=0$ is a single $H_m$-orbit, and the stabilizer in $H_m$ of any such $L$ is trivial.
\item
If  $L \subset W_n$ is a Lagrangian  for which $L\cap V\neq 0$, its stabilizer  in $H_m$ is negligible.
\end{enumerate}
\end{lemma}

\begin{proof}
The proof of \cite[Lemma 2.1]{PSRa} establishes  a bijection between the set of Lagrangians $L \subset W_n$ and the set of triples $(Y,Y',\alpha)$ in which
\begin{itemize}
\item $Y \subset V$ and $Y' \subset V'$ are totally isotropic subspaces,
\item
$\alpha\colon Y^\perp / Y \iso  ( Y') ^\perp /Y'$ is an isometry.
\end{itemize}
The bijection sends a Lagrangian $L$ to the triple defined by $Y=L\cap V$ and $Y'= L\cap V'$ together with the isometry characterized  by
\[
\alpha( y ) = y' \iff y+y' \in L
\]
for all $y\in Y^\perp$ and $y'\in (Y')^\perp$.
Under this  bijection,  the action of the subgroup $H_m \times H_m' \subset H_n$ on the set of Lagrangians  translates to an action on triples:
 an element $(h,h')$ acts by
\[
(Y,Y',\alpha) \mapsto  (hY,h'Y' ,  h' \alpha h^{-1}),
\]
  in which  $h' \alpha h^{-1}$   is the composition
\[
(h Y)^\perp / hY \map{ h^{-1}}  Y^\perp / Y \map{\alpha}  ( Y') ^\perp /Y' \map{ h'}    ( h'Y') ^\perp / h'Y' ,
\]
and we are identifying $H_m \iso \mathrm{U} (V)$ and $H_m' \iso  \mathrm{U}(V')$.

Consider the set of all Lagrangians $L\subset W_n$ for which $L\cap V=0$.  These correspond to triples of the form
$(0,0,\alpha)$, where $\alpha\colon V \iso V'$ is an isometry.  The action of $H_m\iso  \mathrm{U}(V)$ on this set is simply transitive, proving (1).

To prove (2),  suppose $L$ corresponds to a triple  $(Y,Y',\alpha)$  with $Y\neq 0$.  The stabilizer of $L$ in $H_m$ is
\begin{align*}
& \{ h\in  H_m : (hY, Y' ,  \alpha h^{-1} )  = (Y,Y' ,\alpha)  \}  \\
& =
\{ h\in  H_m : hY=Y \mbox{ and } h=\mathrm{id} \mbox{ on } Y^\perp/Y \},
\end{align*}
which  is contained in the proper parabolic subgroup  stabilizing $Y$, and contains its  unipotent radical.
Hence this stabilizer is negligible.
\end{proof}

To complete the proof of Proposition \ref{prop:duplication unfolding}, we make use of  the bijection
\[
 P_n(F) \backslash H_n(F) \iso \{ \mbox{Lagrangian subspaces }L \subset W_n \}.
\]
If  $\gamma \in  P_n(F) \backslash H_n(F)$ corresponds to $L \subset W_n$, then $I_\gamma \subset H_m$ is equal to the stabilizer of $L$ in $H_m$.  By the lemma such an $I_\gamma$ is either trivial or negligible, and the $\gamma$ for which $I_\gamma=1$  form a single orbit under $H_m(F)$.

Having  noted above that $I_\delta=1$, we have now proved that $I_\gamma \subset H_m$ is negligible for any double coset
$
\gamma \in  P_n(F) \backslash H_n(F) / H_m(F)
$
with $\gamma \neq \delta$.
In other words, there exists a proper parabolic subgroup $Q_\gamma$ with unipotent radical $N_\gamma$ satisfying
$
N_\gamma \subset I_\gamma \subset Q_\gamma.
$
The integral
\[
\int_{ [ N_\gamma]  } \Phi_s( \gamma i ( g_1,g_2)   ) \,  f(g_1) \, d g_1
=
\Phi_s( \gamma i ( 1 ,g_2)   )
\int_{  [  N_\gamma ]   }  f(g_1) \, d g_1
\]
vanishes by the cuspidality of $f$, and therefore so does the integral indexed by $\gamma$ on the right hand side of \eqref{duplicate unfolding}.
\end{proof}

\begin{proof}[Proof of Theorem \ref{thm:duplication}]
Proposition  \ref{prop:duplication unfolding} implies the first equality in
\begin{align}\label{duplication punchline}
&  \chi(\det(g_2))^{-1}    \int_{ [H_m] } D( g_1 , g_2^\dagger   , s ,\chi )  f(g_1) \, d g_1    \\
  &=    \chi(\det(g_2))^{-1}  \int_{  H_m (\A_F)  } \Phi^\circ_s(   \delta i (   g_1 , g_2 )  ) f(g_1 ) \, d g_1   \nonumber  \\
   &=  \chi(\det(g_2))^{-1}  \int_{  H_m(\A_F)  } \Phi^\circ_s (  \delta  i  ( g_2 g_1 , g_2 )   ) f( g_2 g_1 ) \, d g_1   \nonumber  \\
  & =   \int_{  H_m (\A_F)  } \Phi^\circ_s(  \delta i (   g_1 , 1 )  ) f(g_2g_1 ) \, d g_1 . \nonumber
\end{align}
For the final equality we have used the observation that the composition
\[
H_m(\A_F) \map{g\mapsto \delta i  (g,g) \delta^{-1} } P_n (\A_F) \map{\eqref{parabolic character}} \A_{F'}^\times
\]
agrees with the determinant on $H_m(\A_F)$, a consequence of Lemma \ref{lem:true double}.

Viewing \eqref{duplication punchline} as an automorphic form in the variable $g_2\in H_m(\A_F)$, it is a $K_m$-fixed vector in the space of $\pi$.
Indeed,  the second integral in \eqref{duplication punchline} is right $K_m$-invariant because $\Phi_s^\circ$ is right $K_n$-invariant, and $i (K_m,K_m) \subset K_n$.
The final integral in \eqref{duplication punchline} is a linear combination of right translates of $f$, and hence lies in $\pi$.
 Up to scaling,  $f\in \pi$ is the unique $K_m$-fixed vector, and we deduce that
 \[
  \int_{  H_m (\A_F)  } \Phi^\circ_s(  \delta i (   g_1  , 1 )  )\,   f (g_2 g_1)   \, d g_1
  = c(s) \cdot f( g_2 )
\]
for some $c(s)$ independent of $g_2$.

To compute the scalar $c(s)$,  let $\breve{f} \in \breve{\pi}$ be the  $K_m$-fixed vector in the contragredient representation of $\pi$, normalized by    $\langle f, \breve{f}\rangle =1$.
Pairing both sides of the above equality against $\breve{f}$ results in
\[
 \int_{  H_m (\A_F)  } \Phi^\circ_s(  \delta  i (   h , 1 )  )  \,
 \langle \pi(h)  f , \breve{f}  \rangle  \, d h     = c(s) .
\]
The left hand side is the well-known doubling zeta integral studied in \cite{Li,PSRa}, see also \cite{HKS,Liu},  and we deduce
\[
c(s) =  \frac{  L( s + 1/2  ,\mathrm{BC}(\pi) \otimes \chi  )   }{ \mathscr{L}_n(s, \chi_0 )   } .
\]
This completes the proof of the duplication formula.
\end{proof}



\subsection{Degenerate coefficients in low rank}
\label{ss:degen}


We return to the assumption that the unramified Hecke character $\chi\colon\A_{F'}^\times \to \C^\times$ satisfies \eqref{hecke restriction}.

We are interested in degenerate coefficients of the Eisenstein series $E_T(g,s,\chi)$ on $H_2(\A_F)$ determined by the normalized spherical section $\Phi_s^\circ \in I_2(s,\chi)$.
More precisely, for a fixed
\[
T = \begin{pmatrix} 0 &  0 \\  0   & t \end{pmatrix} \in \Herm_2(F,\omega_F)
\]
with $t\in \Herm_1(F,\omega_F)$ nonzero, we will relate the   Fourier coefficient
\begin{align*}
E_T( g , s ,\chi )
& =
 \int_{   [ \Herm_2 ]   }     E ( n(b) g , s,\chi ) \,  \langle T,b\rangle_{\psi_0} \, db
\end{align*}
to the $t^\mathrm{th}$ coefficient of the Eisenstein series
\[
\underline{E}(h, s,\chi) = \sum_{\gamma \in P_1(F) \backslash H_1(F) }   \underline{\Phi}_s^\circ (\gamma h)
\]
on $H_1(\A_F)$ determined by the normalized spherical section $\underline{\Phi}_s^\circ \in I_1(s,\chi)$.

\begin{proposition}\label{prop:genus drop}
Fix $h\in H_1(\A_F)$ and
\[
m(\alpha) =  \left(  \begin{smallmatrix} \alpha \\ & \sigma(\alpha^{-1})  \end{smallmatrix}  \right) \in H_1(\A_F)
\]
with $\alpha\in \A_{F'}^\times$.  Recalling the standard embedding $i_0\colon H_1 \times H_1 \to H_2$ from \eqref{standard embedding},   we have the equality
\begin{align*}
&
 q^{ 2s \deg(\omega_X) }  \mathscr{L}_2(s,\chi_0) \cdot  E_T\big( i_0( m(\alpha) ,h)  , s,\chi \big)   \\
& =
  \chi(\alpha) |\alpha|_{F'}^{1+s}    q^{ 2s \deg(\omega_X) }   \mathscr{L}_2(s,\chi_0)  \cdot  \underline{E}_t \left( h ,   \frac{1}{2} + s  ,\chi \right)  \\
& \quad   +  \chi(\alpha) |\alpha|_{F'}^{ 1 -s }  q^{ -2s \deg(\omega_X) }  \mathscr{L}_2(-s,\chi_0) \cdot  \underline{E}_t \left( h ,  \frac{1}{2}  -s  ,\chi  \right) .
\end{align*}
Note that the right hand side is invariant under $s\mapsto -s$, in accordance with the functional equation \eqref{eis functional} satisfied by the left hand side.
\end{proposition}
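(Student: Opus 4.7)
The plan is to unfold the Siegel Eisenstein series $E(\cdot,s,\chi)$ through the orbit decomposition of the Lagrangian Grassmannian of $W_2$ under $P_2(F)$. Writing $L_0\subset W_2$ for the Lagrangian stabilized by $P_2$, Witt's theorem partitions $P_2(F)\backslash H_2(F)$ into three orbits $\mathcal{O}_r$ indexed by $r=\dim(L\cap L_0)\in\{0,1,2\}$. Substituting the sum-definition of $E$ into the Fourier integral and regrouping,
\[
E_T(g,s,\chi) \;=\; \sum_{r=0}^{2}\;\sum_{\gamma\in \mathcal{O}_r}\;\int_{[\Herm_2]}\Phi_s^\circ(\gamma n(b)g)\,\langle T,b\rangle_{\psi_0}\,db,
\]
and for each $r$ the inner sum-plus-integral unfolds to an integration over $(\gamma^{-1}P_2\gamma\cap N_2)(F)\backslash N_2(\A_F)$. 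The orbit $r=2$ (representative $\gamma=1$) contributes $\Phi_s^\circ(g)\int_{[\Herm_2]}\langle T,b\rangle_{\psi_0}\,db$, which vanishes since $T\ne 0$; only the two remaining orbits survive.

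For the middle orbit $r=1$, I would choose a representative $\gamma_1$ so that $\gamma_1^{-1}P_2\gamma_1\cap N_2$ is the codimension-one subgroup $\{n(b):b_{22}=0\}\subset N_2$, which is precisely the kernel of the character $b\mapsto \langle T,b\rangle_{\psi_0}$. The unfolded integral collapses to a one-dimensional integration which, after change of variables inside the subgroup $i_0(1,H_1)\subset H_2$, is exactly the Fourier-integral defining $\underline{E}_t(h,\tfrac12+s,\chi)$. The prefactor $\chi(\alpha)|\alpha|_{F'}^{1+s}$ emerges from evaluating the spherical section on the Levi element: via the Iwasawa decomposition of $i_0(m(\alpha),h)$, the upper-left block determinant of the $P_2$-component factors as $\alpha$ times the corresponding $H_1$-contribution, producing $\chi(\alpha)|\alpha|_{F'}^{s+1}$ (the exponent $s+1=s+n/2$ with $n=2$ coming from the unnormalized induction $I_2(s,\chi)$).

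For the big-cell orbit $r=0$, the representative is the long Weyl element $w_2$, and the unfolded contribution is $\int_{N_2(\A_F)}\Phi_s^\circ(w_2 n(b)g)\,\langle T,b\rangle_{\psi_0}\,db$. I would factor this $\Herm_2$-integral into the one-dimensional direction pairing nontrivially with $T$ (namely the $b_{22}$-entry) and its three-dimensional orthogonal complement (on which the character is trivial). The orthogonal factor is an intertwining integral which, by \eqref{spherical intertwining}, contributes the scalar $q^{-4s\deg(\omega_X)}\mathscr{L}_2(-s,\chi_0)/\mathscr{L}_2(s,\chi_0)$; the residual $b_{22}$-integral then yields $\underline{E}_t(h,\tfrac12-s,\chi)$. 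Conjugating $m(\alpha)$ past $w_2$ replaces $\alpha$ by $\sigma(\alpha^{-1})$, and combining with the modulus character of $P_2$ produces the prefactor $\chi(\alpha)|\alpha|_{F'}^{1-s}$.

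Summing the two surviving orbit contributions and multiplying through by $q^{2s\deg(\omega_X)}\mathscr{L}_2(s,\chi_0)$ reproduces the stated identity, and the $s\mapsto -s$ symmetry of the right-hand side is then manifest and consistent with \eqref{eis functional}. The principal obstacle will be the careful local-to-global bookkeeping of Haar measures and spherical integrals: verifying on the nose that the orthogonal factor of the big-cell integral equals the scalar in \eqref{spherical intertwining} with the conventions of Remark \ref{rem:measures}, and that the nested one-dimensional integral is literally $\underline{E}_t(h,\tfrac12\pm s,\chi)$ at the claimed shift. A secondary subtlety is managing the half-integer shift between the inducing characters on $H_2$ and on $H_1$, which is the origin of the $\tfrac12\pm s$ parametrization on the right-hand side.
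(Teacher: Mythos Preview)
Your overall strategy matches the paper's: the decomposition into three $P_2$-orbits on the Lagrangian Grassmannian is exactly what underlies the formula the paper imports from \cite[Lemma 5.2.5]{GaSa} as \eqref{genus drop split}, and your treatment of the $r=2$ and $r=1$ orbits is correct (the latter is the paper's computation \eqref{easy genus drop}).

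The gap is in your handling of the big cell. The three-dimensional integral over $U=\{b\in\Herm_2:b_{22}=0\}$ is not a scalar: as a function of $h$ it is a section $f^\alpha_{s-1/2}\in I_1(s-\tfrac12,\chi)$, and the residual $b_{22}$-integral therefore produces $\underline{E}_t(h,s-\tfrac12,\chi)$, not $\underline{E}_t(h,\tfrac12-s,\chi)$. More importantly, the constant $c(s-\tfrac12)$ relating $f^\alpha_{s-1/2}$ to the spherical section $\underline{\Phi}^\circ_{s-1/2}$ is \emph{not} the $M_2(s)$-eigenvalue from \eqref{spherical intertwining}: that formula governs the full four-dimensional intertwining integral, not your three-dimensional partial one. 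The paper's device (Lemma \ref{lem:hard genus drop}) is to observe that the full integral $(M_2(s)\Phi_s^\circ)(i_0(m(\alpha),h))$ can also be written as $(M_1(s-\tfrac12)f^\alpha_{s-1/2})(h)$; knowing both $M_2(s)\Phi^\circ_s$ and $M_1(s-\tfrac12)\underline{\Phi}^\circ_{s-1/2}$ from \eqref{spherical intertwining} (applied once for $n=2$ and once for $n=1$) then pins down $c(s-\tfrac12)$ as a \emph{ratio} of the two intertwining scalars, together with the $\alpha$-dependent prefactor. Only after applying the functional equation of $\underline{E}$ to convert $\underline{E}_t(h,s-\tfrac12,\chi)$ into $\underline{E}_t(h,\tfrac12-s,\chi)$ does the net scalar collapse to the clean $q^{-4s\deg(\omega_X)}\mathscr{L}_2(-s,\chi_0)/\mathscr{L}_2(s,\chi_0)$ you wrote down. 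So your anticipated ``principal obstacle'' is genuine, but its resolution requires an idea---comparing the rank-$2$ and rank-$1$ intertwining operators---that your proposal does not contain.
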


\begin{proof}
By \cite[Lemma 5.2.5]{GaSa}, for all $g\in H_2(\A_F)$ we have
\begin{align}
E_T( g , s ,\chi)
& =
  \int_{ \Herm_1(\A_F) }
   \Phi_s^\circ \left(  i_0 \left( 1 ,   w_1^{-1}   n(b)  \right)  \cdot g  \right)\,
 \langle t,b\rangle_{\psi_0}  \, d b   \nonumber \\
   & \quad +
 \int_{  \Herm_2(\A_F)   }
  \Phi_s^\circ \left( w_2^{-1}  n(b)    g  \right)  \,   \langle T,b\rangle_{\psi_0}   \, d b .
   \label{genus drop split}
    \end{align}

The first integral in  \eqref{genus drop split} is easy to deal with.  After noting that
\begin{equation}\label{spherical genus drop}
\underline{\Phi}_{s+1/2}^\circ(h) = \Phi_s^\circ( i_0(1,h) )
\end{equation}
for all $h\in H_1(\A_F)$, we find
\begin{align}
&  \int_{ \Herm_1(\A_F) }
   \Phi^\circ_s \left(  i_0 \left( m(\alpha)  ,   w_1^{-1}   n(b)  h  \right)   \right)   \,
   \langle t,b\rangle_{\psi_0}   \, d b  \nonumber  \\
& =  \chi(\alpha) | \alpha |_{F'}^{s+1}   \int_{ \Herm_1(\A_F) }
   \Phi^\circ_s \left(  i_0 \left( 1  ,   w_1^{-1}   n(b) h  \right)   \right)   \,
     \langle t,b\rangle_{\psi_0}  \, d b  \nonumber  \\
 & =
    \chi(\alpha) | \alpha |_{F'}^{s+1}   \int_{ \Herm_1(\A_F) }   \underline{\Phi}^\circ_{s+1/2} ( w_1^{-1}  n(b)  h )    \,
     \langle t,b\rangle_{\psi_0}  \, d b   \nonumber   \\
& =  \chi(\alpha) | \alpha |_{F'}^{s+1}    \cdot  \underline{E}_t(h ,  s + 1/2   )     .  \label{easy genus drop}
\end{align}

The second integral in \eqref{genus drop split} is not quite so elementary, but is made explicit by the following lemma.

\begin{lemma}\label{lem:hard genus drop}
For all $h\in H_1(\A_F)$ we have
\begin{align*}
&   \int_{  \Herm_2(\A_F)   }
  \Phi^\circ_s \left( w_2^{-1}  n(b)    \cdot i_0( m(\alpha), h)   \right)  \,  \langle T,b\rangle_{\psi_0}  \, d b    \\
 & =
       \chi(\alpha) |\alpha|_{F'}^{-s+1}    q^{    - 4s   \deg(\omega_X)   }
  \frac{  \mathscr{L}_2(-s,\chi_0) }{ \mathscr{L}_2(s,\chi_0)    }
     \cdot \underline{E}_t(h , -s+1/2 ,\chi ),
\end{align*}
\end{lemma}

\begin{proof}
Define a subgroup  of $\Herm_2(\A_F)$ by
\[
U = \left\{ \begin{pmatrix}
 x & z \\ \sigma(z) & 0
\end{pmatrix}   : x \in \A_F \mbox{ and } z \in \A_{F'}\right\}.
\]
There is an obvious decomposition $\Herm_2(\A_F) \iso U \times \Herm_1(\A_F)$, where we view
$\Herm_1(\A_F) \subset \Herm_2(\A_F)$ as the subgroup of matrices with all entries outside the lower right corner equal to $0$.
The choices of Haar measures on $\Herm_1(\A_F)$ and $\Herm_2(\A_F)$ from Remark \ref{rem:measures} then determine a Haar measure on $U$ compatible with this product decomposition.

Consider the function of  $h\in H_1(\A_F)$ defined by
\[
f^\alpha_s(h) =
 \int_U
  \Phi^\circ_{s+1/2} \left( w_2^{-1}  n(u)    \cdot i_0( m(\alpha) , w_1^{-1} h)  \right) \, du .
  \]
As in \cite[Lemma 5.2.6(i)]{GaSa}, this function satisfies  $f^\alpha_s \in I_1(s,\chi)$, allowing us to  form the
 associated Eisenstein series $\underline{E}(h, f^\alpha_s)$ on $H_1(\A_F)$.
 It is clear from the definition that $f^\alpha_s$ is right invariant under $K_1$, and so we must have
 \[
 f^\alpha_s (h) = c(s) \cdot  \underline{\Phi}_s^\circ (h)
 \]
 for some function $c(s)$ independent of $h \in H_1(\A_F)$.

 We begin by computing
 \begin{align*}
&   \int_{  \Herm_2(\A_F)   }
  \Phi^\circ_s \left( w_2^{-1}    n(b)    \cdot i_0( m(\alpha), h)   \right)  \,  \langle T,b\rangle_{\psi_0}  \, d b     \\
  & =     \int_{ \Herm_1(\A_F) }   \int_U   \Phi^\circ_s \left( w_2^{-1}   n(u)   n\begin{pmatrix}   0 & 0  \\ 0 & b'   \end{pmatrix}
   \cdot i_0(  m(\alpha)  , h)  \right) \,   \langle T , b' \rangle_{\psi_0}  \, du       \,    db'         \\
 &=    \int_{ \Herm_1(\A_F)  }     \int_U  \Phi^\circ_s \left( w_2^{-1}   n(u)    \cdot i_0\left(   m(\alpha)    , n(b')  h    \right)   \right)
  \,   \langle T , b' \rangle_{\psi_0}  \, du   \,    db'       \\
 &=    \int_{ \Herm_1( \A_F )  }   f^\alpha_{s-1/2} ( w_1^{-1}  n(b')  h  )   \,   \langle T , b' \rangle_{\psi_0}  \,    db'      .
   \end{align*}
By \eqref{whit factor},  the final integral is the $t^\mathrm{th}$ coefficient of $\underline{E}(h, f^\alpha_{s-1/2})$, and hence
  \begin{align}\label{intertwine to eis}
&   \int_{  \Herm_2(\A_F)   }
  \Phi^\circ_s \left( w_2^{-1}    n(b)    \cdot i_0( m(\alpha), h)   \right)  \,  \langle T,b\rangle_{\psi_0}  \, d b      \\
& = c(s-1/2)  \cdot \underline{E}_t(h , s-1/2 ,\chi )  . \nonumber
   \end{align}

It remains  to compute $c(s)$.
 Recalling the intertwining operator \eqref{intertwining}, the same calculation as above (but with $T=0$) shows that
 \begin{align*}
 (  M_2(s)\Phi^\circ_s )( i_0( m(\alpha) ,h) )
&  = \int_{  \Herm_2(\A_F)   }
  \Phi^\circ_s \left( w_2^{-1}  n(b)    \cdot i_0( m(\alpha)  , h)   \right)  \,  d b    \\
 &=    \int_{ \Herm_1( \A_F )  }   f^\alpha_{s-1/2} ( w_1^{-1}  n(b')  h  )    \,    db'    \\
   & =  ( M_1(s-1/2)   f^\alpha_{s-1/2}  )   (h).
\end{align*}
On the other hand
 \begin{align*}
& (  M_2(s)\Phi^\circ_s )( i_0( m(\alpha) ,h) )   \\
& =  \chi(\alpha) |\alpha|_{F'}^{-s+1}  (  M_2(s)\Phi^\circ_s )( i_0( 1 ,h) ) \\
& \stackrel{\eqref{spherical intertwining}}{=}
  \chi(\alpha) |\alpha|_{F'}^{-s+1}  q^{     -  4s  \deg(\omega_X)   }
 \frac{  \mathscr{L}_2(-s,\chi_0 ) }{ \mathscr{L}_2(s,\chi_0 )  }    \Phi^\circ_{-s}( i_0(1,h) )   \\
 & \stackrel{\eqref{spherical genus drop}}{=}
    \chi(\alpha) |\alpha|_{F'}^{-s+1}   q^{     -  4s  \deg(\omega_X)   }
  \frac{  \mathscr{L}_2(-s,\chi_0 ) }{ \mathscr{L}_2(s,\chi_0 )  }   \underline{\Phi}_{-s+1/2}^\circ(h) .
\end{align*}
Combining these shows that
\[
M_1(s-1/2)   f^\alpha_{s-1/2}
 =
   \chi(\alpha) |\alpha|_{F'}^{-s+1}   q^{     -  4s  \deg(\omega_X)   }
  \frac{  \mathscr{L}_2(-s,\chi) }{ \mathscr{L}_2(s,\chi)  }   \underline{\Phi}_{-s+1/2}^\circ  ,
\]
and comparing with
\[
  M_1(s-1/2) \underline{\Phi}_{s-1/2}^\circ  \stackrel{\eqref{spherical intertwining}}{=}    q^{   (- 2s+1 )  \deg(\omega_X)   }
     \frac{  \mathscr{L}_1(-s + 1/2 ,\chi_0) }  { \mathscr{L}_1(s -1/2,\chi_0)   }   \cdot  \underline{\Phi}^\circ_{-s + 1/2}
\]
one deduces
\[
 c( s-1/2)
     =
       \chi(\alpha) |\alpha|_{F'}^{-s+1}    q^{    - ( 2s+1 )  \deg(\omega_X)   }
  \frac{  \mathscr{L}_2(-s,\chi_0) }{ \mathscr{L}_2(s,\chi_0)    }
     \frac  { \mathscr{L}_1(s -1/2,\chi_0)   }  {  \mathscr{L}_1(-s + 1/2 ,\chi_0) }  .
\]
To complete the proof of the lemma, substitute this formula into \eqref{intertwine to eis} and use the functional equation \eqref{eis functional} for $\underline{E}(h,s,\chi)$.
\end{proof}

To complete the proof of Proposition \ref{prop:genus drop},
take $g=i_0(m(a),h)$ in the equality  \eqref{genus drop split},  replace the first  integral  with the expression from \eqref{easy genus drop}, and replace the second integral with the expression from Lemma \ref{lem:hard genus drop}.
\end{proof}

We now translate Proposition \ref{prop:genus drop} into the language of geometric Fourier coefficients from \S \ref{ss:fourier}.
Fix two line bundles $\mathcal{E}_1, \mathcal{E}_2 \in \mathrm{Pic}(X')$ and an injective hermitian morphism $a_2\colon \mathcal{E}_2 \to \sigma^* \mathcal{E}_2^\vee$.  This data determines a pair $(\mathcal{E},a)$,  in which
\[
\mathcal{E} =  \mathcal{E}_1 \oplus \mathcal{E}_2 \map{  a = 0 \oplus a_2  }  \sigma^*\mathcal{E}_1^\vee \oplus  \sigma^*\mathcal{E}_2^\vee = \sigma^* \mathcal{E}^\vee.
\]
 Proposition \ref{prop:genus drop} is equivalent to the relation
\begin{align}\label{geometric genus drop}
&
 q^{ 2s \deg(\omega_X) }  \mathscr{L}_2(s,\chi_0) \cdot  E_{ ( \mathcal{E},a) } (s,\chi)  \\
& =
  \chi(\mathcal{E}_1)  q^{(1+s) \deg(\mathcal{E}_1)}   q^{ 2s \deg(\omega_X) }   \mathscr{L}_2(s,\chi_0)  \cdot  \underline{E}_{(\mathcal{E}_2 , a_2) } \left(  \frac{1}{2} + s  ,\chi \right)   \nonumber \\
& \quad   +   \chi(\mathcal{E}_1)  q^{(1-s) \deg(\mathcal{E}_1)}   q^{ -2s \deg(\omega_X) }   \mathscr{L}_2(-s,\chi_0)  \cdot  \underline{E}_{(\mathcal{E}_2 , a_2) } \left(  \frac{1}{2} - s  ,\chi \right)  \nonumber
\end{align}
for all pairs $(\mathcal{E},a)$ of the above form.


\section{Cycles on moduli spaces of shtukas}
\label{s:shtukas}


Fix an integer $n\ge 1$.
Now we turn to the study of  special cycle classes on the moduli space $\Sht^r_{\mathrm{U}(n)}$ of unitary shtukas, as in the introduction.

First we recall the arithmetic Siegel-Weil formula of Feng-Yun-Zhang, relating the degrees of special $0$-cycles on $\Sht^r_{\mathrm{U}(n)}$ to the Siegel Eisenstein series from \S \ref{s:automorphic}.
In  Conjecture \ref{conj:ASW} we propose a modest extension of this formula.  This  conjectural extension seems to be the minimum extra information that one needs   to extract a Gross-Zagier style intersection formula from the arithmetic Siegel-Weil formula.

In \S \ref{ss:low rank ASW}  we prove Conjecture \ref{conj:ASW} in a simple case, for special $0$ cycles on  $\Sht^r_{\mathrm{U}(2)}$ of a particular type.
Finally, in \S \ref{ss:main results},  we  put everything together to prove Theorems \ref{thm:simple duplication} and \ref{thm:GKZ} of  the introduction.


\subsection{The arithmetic Siegel-Weil formula}


As in \cite[\S 7]{FYZ1}, let
\[
 \cZ^r_\mathcal{E}(a) \to  \Sht_{\mathrm{U}(n)}^r
\]
be the naive special cycle determined by  a rank $n$ vector bundle $\mathcal{E}$ on $X'$, and a hermitian morphism  $a\colon\mathcal{E} \to \sigma^*\mathcal{E}^\vee$.
This naive special cycle has expected dimension $0$, and in   \cite[Definition 4.8]{FYZ2} one finds the the construction of a $0$-cycle class
\[
[ \cZ^r_\mathcal{E}(a) ] \in \mathrm{Ch}_0 ( \cZ^r_\mathcal{E}(a) ).
\]
In other words, we have now put ourselves in the $m=n$ case of   \eqref{FYZclass}.

The following \emph{arithmetic Siegel-Weil formula} is the main result of \cite{FYZ1}.
  It is the function field analogue of a formula conjectured by Kudla-Rapoport  \cite{KR},  and proved by Li-Zhang \cite{LZ}, relating the degrees of special cycles on (integral models of) unitary Shimura varieties to derivatives of Siegel Eisenstein series on quasi-split unitary groups.

\begin{theorem}[Feng-Yun-Zhang] \label{thm:ASW}
Let $(\mathcal{E},a)$ be as above.  If the hermitian morphism $a$ is injective, then the naive special cycle $\cZ^r_\mathcal{E}(a)$ is proper over $k$, and the special $0$-cycle class
$
[ \cZ^r_\mathcal{E}(a) ] \in \mathrm{Ch}_c^{r n} (  \Sht_{\mathrm{U}(n)}^r )
$
 satisfies
\begin{equation}\label{ASW}
\deg\, [ \cZ^r_\mathcal{E}(a) ]
  =  \frac{1}{ (\log q)^r }   \cdot
   \frac{q^{  \frac{n}{2}  d(\mathcal{E} )   }   }{   \chi( \det(\mathcal{E}  ) )      }   \cdot
    \frac{d^r}{ds^r} \Big|_{s=0}
   \widetilde{E}_{  ( \mathcal{E} ,a)  } ( s , \chi)
\end{equation}
for any unramified Hecke character
\[
\chi\colon\A_{F'}^\times \to \C^\times
\]
whose restriction to $\A_F^\times$ is $\eta^n$.
On the right hand side,
\begin{itemize}
\item  $d(\mathcal{E})$ is defined by \eqref{eq:dE},
\item  $\widetilde{E}(g,s,\chi)$  is  the renormalized Eisenstein series  on $H_n(\A_F)$  of \eqref{renormalized eisenstein},
\item
 $ \widetilde{E}_{  ( \mathcal{E} ,a)  } ( s , \chi)$ is its geometric Fourier coefficient in the sense of \S \ref{ss:fourier}.
 \end{itemize}
 \end{theorem}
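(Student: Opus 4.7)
The plan is to split Theorem \ref{thm:ASW} into two independent claims: (i) properness of $\cZ^r_\mathcal{E}(a)$ under the injectivity assumption, and (ii) the identity between $\deg [\cZ^r_\mathcal{E}(a)]$ and the $r$-th derivative of the geometric Fourier coefficient. For (i), injectivity of $a\colon\mathcal{E}\to\sigma^*\mathcal{E}^\vee$ forces strong boundedness on the moduli problem: a point of $\cZ^r_\mathcal{E}(a)$ parametrizes a unitary shtuka whose underlying bundle $\mathcal{F}$ comes with a map $\mathcal{E}\to\mathcal{F}$ that, when composed with the hermitian structure of $\mathcal{F}$, recovers $a$.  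Since $a$ is generically an isomorphism, this map is generically injective, and combined with hermitian compatibility it yields uniform bounds on the Harder-Narasimhan polygon of $\mathcal{F}$; properness then follows via the standard valuative criterion.

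For (ii), I would match the two sides locally by exploiting the Euler product structure on both. The Fourier coefficient of the normalized spherical Eisenstein series factors, by the unramified Whittaker calculations of \cite{Tan}, as a product of local densities $W_v(a_v,s)$ indexed by closed points $v$ of $X$, with $W_v(a_v,s)=1$ whenever $a_v$ is an isomorphism (so only finitely many factors are nontrivial).  Applying the Leibniz rule to the $r$-th derivative at $s=0$ expands it as a sum over ordered $r$-tuples $(v_1,\ldots,v_r)$ of closed points of $X$, with each summand a product of individual $s$-derivatives of local densities.

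On the geometric side, one stratifies the $0$-cycle class $[\cZ^r_\mathcal{E}(a)]$ by the image in $X$ of the $r$ legs of the shtuka, obtaining a matching decomposition of $\deg [\cZ^r_\mathcal{E}(a)]$ into contributions indexed by tuples $(v_1,\ldots,v_r)$.  The global identity then reduces to a purely local arithmetic Siegel-Weil identity at each closed point $v$: the contribution to $\deg [\cZ^r_\mathcal{E}(a)]$ coming from legs mapping to $v$ should match the corresponding $s$-derivative of $W_v(a_v,s)$ at $s=0$, up to the explicit normalizations involving $d(\mathcal{E})$ and $\deg(\omega_X)$.

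The principal obstacle is this local identity, which is the function field analogue of the Kudla-Rapoport conjecture.  Its proof requires constructing the local virtual fundamental class refining the naive special cycle, as in \eqref{FYZclass}, computing its length on local shtuka spaces, and deriving a parallel explicit formula for the local Whittaker density and its derivatives.  In practice these are matched by a double induction on $\ord_v(\det a)$ and on the number of legs at $v$, together with a deformation argument that reduces both sides to the same combinatorial lattice-counting expression; this is the most delicate step and is essentially where the work of Feng-Yun-Zhang is concentrated.
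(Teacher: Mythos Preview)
Your proposal is not wrong in spirit, but it addresses a different problem than the paper does.  The theorem is explicitly attributed to Feng--Yun--Zhang, and the paper's proof is purely a citation: properness is \cite[Proposition 7.13]{FYZ1}, and the degree identity \eqref{ASW} is obtained by splicing together two formulas already established in \cite[\S 9.3 and \S 9.5]{FYZ2}, namely
\[
\deg [\cZ^r_\mathcal{E}(a)] = \frac{1}{(\log q)^r}\,\frac{d^r}{ds^r}\Big|_{s=0} q^{s d(\mathcal{E})}\,\mathrm{Den}(q^{-2s},\mathrm{coker}(a))
\]
and
\[
\mathrm{Den}(q^{-2s},\mathrm{coker}(a)) = \frac{q^{(\frac{n}{2}-s)d(\mathcal{E})}}{\chi(\det(\mathcal{E}))}\,\widetilde{E}_{(\mathcal{E},a)}(s,\chi),
\]
where $\mathrm{Den}$ is the global density function attached to the torsion sheaf $\mathrm{coker}(a)$.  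No independent argument is given or needed in this paper.

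What you have written is instead an outline of the proof of the Feng--Yun--Zhang theorem itself.  The broad architecture you describe (Harder--Narasimhan bounds for properness; Euler factorization of the Whittaker coefficient; reduction to a local Kudla--Rapoport identity at each closed point) is indeed close to how \cite{FYZ1} proceeds, and your final paragraph correctly identifies the local identity as the heart of the matter.  A couple of your details are rougher than the actual argument: the geometric side in \cite{FYZ1} is organized not by directly stratifying over leg positions and matching term by term via a Leibniz expansion, but by first expressing $\deg[\cZ^r_\mathcal{E}(a)]$ through the density of $\mathrm{coker}(a)$ and then invoking a local density comparison; and the local proof uses an inductive formula on the torsion length together with explicit lattice combinatorics rather than the double induction plus deformation you sketch.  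But these are refinements of a correct overall plan, and in any case the paper under review simply quotes the end result.
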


\begin{proof}
The properness assertion is \cite[Proposition 7.13]{FYZ1}.  The equality \eqref{ASW} follows by combining the equalities
  \[
  \deg (    [ \cZ^r_\mathcal{E} ( a)  ]  )
  =  \frac{1}{ (\log q)^r  } \frac{d^r}{ds^r}\Big|_{s=0}
  q^{ s d(\mathcal{E} )}  \mathrm{Den}( q^{-2s}, \mathrm{coker}(a) )
  \]
  and
  \[
  \mathrm{Den}( q^{-2s}, \mathrm{coker}(a) )
  =
  \frac{q^{ (\frac{n}{2}-s) d(\mathcal{E})}  }{\chi(\det(\mathcal{E}))   }
  \widetilde{E}_{  ( \mathcal{E}  ,a)  } ( s , \chi)
  \]
  of \cite[\S 9.3 and \S 9.5]{FYZ2}.
\end{proof}

As explained in the introduction, properness of the naive special cycle $\cZ^r_\mathcal{E}(a)$ is needed to make sense of the degree \eqref{degree} appearing in the theorem.
Because lack of properness is the only obstruction to formulating the equality  \eqref{ASW} in general (i.e.~without the assumption that $a$ is injective), we make the following conjecture.

\begin{conjecture}\label{conj:ASW}
The equality \eqref{ASW} holds whenever the naive special cycle $\cZ^r_\mathcal{E}(a)$ is proper over $k$.
\end{conjecture}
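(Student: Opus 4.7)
My plan is to follow the two-step factorization used in the proof of Theorem \ref{thm:ASW} and extend each step to arbitrary $a$ under the properness hypothesis. That proof passes through (i) a \emph{geometric} identity $\deg[\cZ^r_\mathcal{E}(a)] = (\log q)^{-r}\cdot \frac{d^r}{ds^r}\big|_{s=0} q^{sd(\mathcal{E})}\mathrm{Den}(q^{-2s}, \mathrm{coker}(a))$ and (ii) an \emph{automorphic} identity matching this local density to the Fourier coefficient $\widetilde{E}_{(\mathcal{E},a)}(s,\chi)$.  Properness of $\cZ^r_\mathcal{E}(a)$ makes the degree in \eqref{ASW} well defined, so the task reduces to upgrading (i) and (ii) to non-injective $a$.

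For the automorphic identity (ii) I would run a ``genus drop'' argument generalizing Proposition \ref{prop:genus drop}.  If $\ker(a)$ has generic rank $k>0$, split the generic fiber of $(\mathcal{E},a)$ as an orthogonal sum of a $k$-dimensional radical and a nondegenerate part $(\mathcal{E}',a')$ with $a'$ injective on a dense open.  Integrating out the unipotent directions paired to the radical, then applying the intertwining computation \eqref{spherical intertwining} and the functional equation \eqref{eis functional}, should express $\widetilde{E}_{(\mathcal{E},a)}(s,\chi)$ as a symmetric combination of coefficients $\widetilde{E}_{(\mathcal{E}',a')}(s\pm\tfrac{1}{2},\chi)$ on the smaller group $H_{n-k}$, exactly as in \eqref{geometric genus drop}.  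One expects a parallel factorization of $\mathrm{Den}(X,\mathrm{coker}(a))$ into an explicit local factor (attached to the radical) times the density for $(\mathcal{E}',a')$; matching these factorizations term by term and invoking Theorem \ref{thm:ASW} for the injective pair $(\mathcal{E}',a')$ would close (ii) by induction on $\mathrm{rank}\,\ker(a)$.

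The geometric identity (i) is the real difficulty.  Its injective-case proof in FYZ2 \S 9.3 interprets the $r$-th derivative of the density as a sum of intersection multiplicities at geometric points of the cycle.  When $a$ is non-injective the naive cycle $\cZ^r_\mathcal{E}(a)$ has components of excess dimension, and the class \eqref{FYZclass} is built as a top Chern class of an excess bundle on the naive cycle.  My plan is to use properness to justify an excess-intersection localization, writing $\deg[\cZ^r_\mathcal{E}(a)]$ as a sum over a stratification of $\cZ^r_\mathcal{E}(a)$ by the position of the hermitian saturation of the universal pair, with each stratum contributing a local density at the corresponding place.  The induction on $\mathrm{rank}\,\ker(a)$ from the automorphic side should mirror an excision of the top excess stratum on the geometric side, reducing to a special cycle with smaller kernel whose degree is already handled inductively.

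The main obstacle is precisely this excess-intersection analysis, since the FYZ2 density formula relies on injectivity both for absolute convergence of the local integrals involved and for the clean hermitian-lattice enumeration interpretation of $\mathrm{Den}$.  A natural base case is $n=2$ with $\mathcal{E}=\mathcal{E}_1\oplus\mathcal{E}_2$ and $a=0\oplus a_2$ with $a_2$ a hermitian isomorphism, which is exactly the situation to be handled in \S \ref{ss:low rank ASW}; the geometry there should serve as the template for the higher-rank excess-intersection computation, with the automorphic genus drop dictating which strata must contribute and with what multiplicities.
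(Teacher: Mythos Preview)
The statement you are attempting is a \emph{conjecture} in the paper, and the paper does not prove it. Immediately after stating it the authors write that any case with $a$ not injective ``seems to require methods substantially different from those that go into the proof of Theorem~\ref{thm:ASW}.'' What the paper does establish is only the very special case recorded as Theorem~\ref{thm:intersection}: $n=2$, $\mathcal{E}=\mathcal{E}_1\oplus\mathcal{E}_2$ with $\mathcal{E}_i$ line bundles and $a_2\colon\mathcal{E}_2\iso\sigma^*\mathcal{E}_2^\vee$ a hermitian isomorphism. So there is no proof in the paper against which to check your proposal; what you have written is a research strategy for an open problem, and you yourself flag the key step as unresolved.

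There are also concrete obstructions to the strategy as written. Your step (i) is phrased in terms of $\mathrm{Den}(q^{-2s},\mathrm{coker}(a))$, but when $a$ is not injective $\mathrm{coker}(a)$ is not a torsion sheaf, and the density function of \cite{FYZ2} is only defined for torsion hermitian sheaves; the identity you want to extend literally has no right-hand side in the degenerate case. Your orthogonal splitting of $(\mathcal{E},a)$ into a radical and a nondegenerate piece $(\mathcal{E}',a')$ is only available at the generic fiber and need not globalize to subbundles on $X'$; it does globalize in the paper's special case precisely because $a_2$ is an isomorphism, forcing $\ker(a)$ to be a direct summand. Finally, the excess-intersection localization you invoke for the geometric side is exactly the ``substantially different method'' the authors say is missing; nothing in your outline supplies it.

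For comparison, the paper's proof of the $n=2$ special case does \emph{not} go through your density factorization. It instead identifies $\cZ^r_\mathcal{E}(a)\iso\cZ^r_{\mathcal{E}_2}(a_2)\iso\Sht^r_{\mathrm{U}(1)}$ via Lemmas~\ref{lem:unm} and~\ref{lem:unn}, recognizes $[\cZ^r_\mathcal{E}(a)]$ as a product of tautological Chern classes on $\Sht^r_{\mathrm{U}(1)}$, and quotes the direct degree computation \cite[Theorem~10.2]{FYZ2} for that product. The automorphic side is matched by the genus-drop identity \eqref{geometric genus drop} together with Proposition~\ref{prop:unr eisenstein coefficient}. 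This route bypasses $\mathrm{Den}$ entirely, which is consistent with the authors' warning that the injective-case machinery does not extend.
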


Proving this in any case with $a$ not injective seems to require methods substantially different from those that go into the proof of Theorem \ref{thm:ASW}.


\subsection{A low rank case of Conjecture \ref{conj:ASW}}
\label{ss:low rank ASW}


We will prove  Conjecture \ref{conj:ASW} when $\mathcal{E}$ is a vector bundle on $X'$ of  rank $n=2$,  and $a\colon\mathcal{E} \to \mathcal{E}^\vee$ is a (not necessarily injective) hermitian morphism of a particular type.
 Some of the preparatory results needed for this result work in greater generality, so for the moment we work with any  positive integers $m\le n$.

\begin{lemma}\label{lem:lcitrunc}
  Let $\mathscr X$ be a connected quasi-smooth derived Artin stack over $k$ with classical truncation $i\colon X\hookrightarrow\mathscr X$. If $X$ is a local complete intersection  over $k$ of dimension  $\dim(X)=\operatorname{vd}(\mathscr{X})$, where $\operatorname{vd}(\mathscr X)$ denotes the virtual dimension of $\mathscr X$, then $\mathscr X$ is classical and $i\colon X\hookrightarrow\mathscr X$ is an isomorphism.
\end{lemma}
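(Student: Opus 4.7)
The plan is to work smooth-locally on $\mathscr{X}$ and reduce to a standard Koszul computation. First, I would choose a smooth surjection $p\colon U\to\mathscr{X}$ from an affine derived scheme $U$. Since smoothness preserves quasi-smoothness, $U$ itself is quasi-smooth, with virtual dimension $\operatorname{vd}(U)=\operatorname{vd}(\mathscr{X})+\dim(U/\mathscr{X})$; its classical truncation $U_{\mathrm{cl}}=U\times_{\mathscr{X}}X$ is smooth over $X$, hence a local complete intersection of dimension $\dim X+\dim(U/\mathscr{X})=\operatorname{vd}(U)$. By smooth descent, the inclusion $i$ is an isomorphism of derived stacks if and only if the analogous inclusion $U_{\mathrm{cl}}\hookrightarrow U$ is one, so it is enough to treat the case of a quasi-smooth derived scheme.

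Next, because $U$ is affine and quasi-smooth, Zariski-locally around any chosen point it admits a presentation as a derived zero locus: there exist a smooth affine scheme $V$, a rank $c$ vector bundle $E$ on $V$, and a section $s\colon V\to E$ such that $U$ is equivalent to the homotopy fiber of $s$ against the zero section of $E$, with $c=\dim V-\operatorname{vd}(U)$. Under this presentation $U_{\mathrm{cl}}$ is the classical scheme-theoretic vanishing locus $V(s)\subset V$, and the structure sheaf of $U$ is computed by the Koszul complex $\mathrm{Kos}(s)=\mathrm{Sym}^{\bullet}_{\mathcal{O}_V}(E^{\vee}[1])/(s)$.

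At this point the hypothesis does all the work. By assumption $V(s)\subset V$ is l.c.i.\ of codimension $c$, the generically expected codimension of the vanishing locus of a section of $E$. This forces the local components of $s$, in any trivialization of $E$, to form a regular sequence in $\mathcal{O}_V$; consequently $\mathrm{Kos}(s)$ is a resolution of $\mathcal{O}_{V(s)}$ concentrated in degree zero, and all higher homotopy sheaves of $\mathcal{O}_U$ vanish. Hence $U\simeq V(s)=U_{\mathrm{cl}}$ as derived schemes in a neighborhood, and the conclusion follows by smooth descent.

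The only delicate ingredient is the smooth-local Koszul presentation of a quasi-smooth derived scheme, which is standard in derived algebraic geometry; once this is in hand the argument reduces to the elementary fact that a regular section has coincident classical and derived vanishing loci, together with descent. An alternative packaging would argue directly that the obstruction theory $\mathbb{L}_{U_{\mathrm{cl}}/U}$ vanishes because $\mathbb{L}_{U_{\mathrm{cl}}}$ already has the correct Tor-amplitude and rank, but the Koszul approach is more transparent.
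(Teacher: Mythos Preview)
Your argument is correct, and in fact you anticipate the paper's approach in your final paragraph: the paper proceeds exactly via the ``alternative packaging'' you mention. After the same smooth-local reduction to an affine $\Spec(R)$ with classical truncation $\Spec(\pi_0(R))$, the paper analyzes the relative cotangent complex $\bL_i = \bL_{\pi_0(R)/R}$ directly. Connectivity estimates from \cite{Lur} force $\bL_i$ to be $2$-connective, while quasi-smoothness of both $R$ and $\pi_0(R)$ bounds its Tor-amplitude from below by $-2$; hence $\bL_i \simeq M[2]$ for a locally free $\pi_0(R)$-module $M$, and an Euler-characteristic count using $\dim(\pi_0(R)) = \operatorname{vd}(R)$ gives $\rank(M)=0$, so $\bL_i \simeq 0$ and $i$ is an equivalence. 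Your Koszul route is more hands-on: it trades the abstract cotangent-complex bookkeeping for the concrete input that a quasi-smooth affine admits a local presentation as a derived zero locus, after which the claim reduces to the classical fact that $c$ equations cutting out a codimension-$c$ subscheme of a Cohen--Macaulay scheme form a regular sequence. The paper's argument has the virtue of being coordinate-free and not requiring the (standard but nontrivial) existence of such a presentation; yours makes the geometric mechanism --- that the Koszul complex has no higher homology precisely when the section is regular --- completely transparent.
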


\begin{proof}
  The derived pullback of $i$ along any derived affine smooth chart $\Spec (R) \to\mathscr X$ is the canonical inclusion
  \[\Spec(\pi_0(R))\hookrightarrow \Spec (R),\]
  where $\pi_0(R)$ is a local complete intersection and $\dim(\pi_0(R))=\operatorname{vd}(R)$.
  As the hypotheses and conclusion of the lemma are both smooth-local in $\mathscr X$,  we may assume $\mathscr X=\Spec (R)$ and $X=\Spec(\pi_0(R))$.

  In this setting, the canonical map $R\twoheadrightarrow\pi_0(R)$ corresponding to $i$ induces an isomorphism on $\pi_0$, so the fiber of this map is 1-connective. By~\cite[Corollary 25.3.6.4]{Lur}, the relative cotangent complex $\bL_{\pi_0(R)/R}=\bL_{i}$ is 2-connective, i.e.\ of Tor-amplitude in degrees $\leq-2$. On the other hand, both $R$ and $\pi_0(R)$ are quasi-smooth, so $\bL_{R/k}$ and $\bL_{\pi_0(R)/k}$ are perfect of Tor-amplitude in degrees $\geq -1$, which implies that $\bL_{i}$ is perfect of Tor-amplitude in degrees $\geq -2$. Combining these two observations, it follows that $\bL_{i}\simeq M[2]$ for some locally free $\pi_0(R)$-module $M$. Finally,
  \[
  \rank (M)=\chi(\bL_{i})=\chi(\bL_{\pi_0(R)/k})-\chi(\bL_{R/k})=\dim(\pi_0(R))-\operatorname{vd}(R)=0,
  \]
   so $M$ is the zero module, and $\bL_i\simeq 0$. As $i$ also clearly induces an isomorphism on classical truncations, it follows by \cite[Corollary 25.3.6.6]{Lur} that $i$ is an isomorphism.
\end{proof}

\begin{lemma}\label{lem:LCIclasses}
Suppose $\mathcal{E}$ is rank $m$ vector bundle on $X'$, and  $a\colon\mathcal{E} \to \sigma^*\mathcal{E}^\vee$ is a hermitian morphism.  If  the naive special cycle $\cZ^r_{\mathcal{E}}(a)$ on  $\Sht^r_{\mathrm{U}(n)}$
is a local complete intersection  of dimension $r(n-m)$, then the  cycle class
\[
[ \cZ_{\mathcal{E}}^r(a) ] \in \Ch_{ r (n-m) } ( \cZ_{\mathcal{E}}^r(a) )
\]
 is the fundamental class of $\cZ_{\mathcal{E}}^r(a)$ in the usual sense.
\end{lemma}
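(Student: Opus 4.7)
The plan is to reduce the statement to Lemma \ref{lem:lcitrunc} via the derived geometric construction of the Feng-Yun-Zhang class. Recall that the cycle class $[\cZ^r_{\mathcal{E}}(a)]$ of \cite[Definition 4.8]{FYZ2} is by construction the virtual fundamental class of a canonical derived enhancement
\[
\cZ^r_{\mathcal{E}}(a) \hookrightarrow \mathscr{Z}^r_{\mathcal{E}}(a),
\]
which is a quasi-smooth derived Deligne-Mumford stack whose virtual dimension equals the expected dimension $r(n-m)$ of the naive cycle. Thus the first step is simply to write down this derived enhancement and record its quasi-smoothness and virtual dimension; all of this is contained in \cite[\S 4]{FYZ2}.

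Next, I invoke the hypothesis that the classical truncation $\cZ^r_{\mathcal{E}}(a)$ is a local complete intersection of dimension $r(n-m)=\operatorname{vd}(\mathscr{Z}^r_{\mathcal{E}}(a))$. Lemma \ref{lem:lcitrunc} then applies directly (after working smooth-locally to pass to a quasi-smooth derived affine chart, which is the setting in which that lemma was proved), and gives that the inclusion $\cZ^r_{\mathcal{E}}(a)\hookrightarrow \mathscr{Z}^r_{\mathcal{E}}(a)$ is an isomorphism, i.e.\ the derived enhancement is already classical.

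Finally, for any quasi-smooth derived stack whose derived and classical structures coincide, the virtual fundamental class is tautologically the ordinary fundamental class: the virtual structure sheaf reduces to the usual structure sheaf, and the virtual class is obtained by capping with it. Therefore $[\cZ^r_{\mathcal{E}}(a)]\in \Ch_{r(n-m)}(\cZ^r_{\mathcal{E}}(a))$ agrees with the classical fundamental class of the local complete intersection $\cZ^r_{\mathcal{E}}(a)$.

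The main obstacle is purely bookkeeping: verifying that Lemma \ref{lem:lcitrunc}, proved in the affine setting, applies to the derived Deligne-Mumford stack $\mathscr{Z}^r_{\mathcal{E}}(a)$. Since both the hypothesis of LCI and the conclusion of classicality are smooth-local, choosing any smooth atlas and invoking the affine case suffices; no further difficulty arises.
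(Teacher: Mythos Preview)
Your argument is essentially identical to the paper's: invoke the derived enhancement $\sZ^r_{\cE}(a)$ from \cite{FYZ2}, apply Lemma~\ref{lem:lcitrunc} to conclude it is classical, and deduce that the virtual fundamental class equals the ordinary one. Two small points: the paper pins down the relevant input as \cite[Theorem 6.6, Remark 6.7]{FYZ2} (rather than just \S 4), since Definition 4.8 alone does not literally present the class as a virtual fundamental class; and your worry about Lemma~\ref{lem:lcitrunc} being ``affine'' is unnecessary, as that lemma is already stated for arbitrary connected quasi-smooth derived Artin stacks, with the reduction to affine charts handled inside its proof.
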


\begin{proof}
The content  of \cite[Theorem 6.6, Remark 6.7]{FYZ2} is that the naive special cycle $\cZ^r_{\cE}(a)$ can be realized as the classical truncation of a quasi-smooth derived Artin stack $\sZ^r_{\cE}(a)$ of virtual dimension $r(n-m)$,   in such a way that the canonical isomorphism
\[
\Ch_{ r(n-m) }  ( \cZ^r_{\cE}(a) ) \iso \Ch_{ r(n-m) }  ( \sZ^r_{\cE}(a) )
\]
identifies the cycle class $[ \cZ_{\mathcal{E}}^r(a) ]$ of \cite[Definition 4.8]{FYZ2} with the
virtual fundamental class of $ \sZ^r_{\cE}(a)$.
The claim follows immediately from this, as $ \cZ^r_{\cE}(a) \iso  \sZ^r_{\cE}(a)$ by Lemma \ref{lem:lcitrunc}.
\end{proof}

\begin{lemma}\label{lem:shtu1proper}
  The canonical  morphism  $\Sht_{\mathrm{U}(1)}^r\to (X')^r$ recording the legs is finite \'etale.
  \end{lemma}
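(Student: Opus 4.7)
The plan is to observe that étaleness is automatic and to deduce finiteness by identifying $\Sht^r_{\mathrm{U}(1)}$ with a Lang torsor. The general dimension formula recalled in the introduction gives that $\Sht^r_{\mathrm{U}(1)}\to (X')^r$ is smooth of relative dimension $r(n-1)|_{n=1}=0$, so it is automatically étale; the substantive content is finiteness.

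First I would unwind the moduli problem explicitly. A $T$-point of $\Sht^r_{\mathrm{U}(1)}$ consists of legs $x_1,\ldots,x_r\in X'(T)$, a hermitian line bundle $(\mathcal{L}, h\colon\mathcal{L}\cong\sigma^*\mathcal{L}^\vee)$ on $X'\times T$, and a chain of elementary modifications relating $\mathcal{L}$ to its Frobenius twist ${}^\tau\mathcal{L}$, supported at the graphs of the legs and compatible with $h$. Since $\mathcal{L}$ has rank one, each modification at a leg is either upper or lower; recording the signs $\epsilon\in\{\pm 1\}^r$ decomposes $\Sht^r_{\mathrm{U}(1)}$ into a finite disjoint union of open-and-closed substacks $\Sht^r_{\mathrm{U}(1),\epsilon}$, on each of which the modification datum collapses to a single hermitian-compatible isomorphism ${}^\tau\mathcal{L}\xrightarrow{\sim} \mathcal{L}\bigl(\sum_i\epsilon_i[x_i]\bigr)$.

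Next I would reduce to a Lang isogeny. The stack $\Bun_{\mathrm{U}(1)}$ of hermitian line bundles on $X'$ is a smooth finite-type Artin stack whose coarse moduli space is a disjoint union of torsors for the Prym $\Prym(X'/X)$, and the assignment $\mathcal{L}\mapsto \mathcal{L}^{-1}\otimes{}^\tau\mathcal{L}$ takes values in the Prym (the difference of two hermitian line bundles being of trivial norm). The explicit description above exhibits $\Sht^r_{\mathrm{U}(1),\epsilon}$ as the fiber product of this Lang morphism with a signed Abel--Jacobi morphism $(x_i)\mapsto \mathcal{O}\bigl(\sum_i\epsilon_i[x_i]\bigr)$ from $(X')^r$ into $\Prym(X'/X)$, defined on the locus of admissible sign patterns. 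By Lang's theorem applied to the abelian variety $\Prym(X'/X)$, the Lang morphism is surjective finite étale; base change then gives that $\Sht^r_{\mathrm{U}(1),\epsilon}\to(X')^r$ is finite étale, and taking the disjoint union over admissible $\epsilon$ finishes the argument.

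The hard part will be the bookkeeping needed to pin down hermitian compatibility of the modifications: identifying the admissible set of sign patterns $\epsilon$ (which in particular should recover the evenness of $r$ needed for $\Sht^r_{\mathrm{U}(1)}$ to be nonempty), verifying that the signed Abel--Jacobi morphism indeed factors through the Prym, and ensuring that Lang's theorem applies not merely on the coarse moduli space but on the full Artin stack $\Bun_{\mathrm{U}(1)}$ with its automorphism structure. Once these compatibilities are unwound, the remainder of the argument is a formal consequence of Lang's theorem on an abelian variety.
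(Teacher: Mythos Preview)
Your approach is essentially the same as the paper's: both identify the leg morphism as a base change of the Lang isogeny on the Prym variety $\Prym=\ker(\mathrm{Nm}\colon\Pic_{X'}\to\Pic_X)$, from which finite \'etaleness is immediate. The paper simply cites the cartesian diagram \cite[(2.2)]{FYZ2} for this identification, whereas you rediscover it by unwinding the moduli problem and decomposing over sign patterns; the bookkeeping you flag (hermitian compatibility forcing the Abel--Jacobi image into $\Prym$, and admissibility of sign patterns) is exactly what that citation packages.
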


\begin{proof}
The cartesian diagram \cite[(2.2)]{FYZ2} expresses this morphism  as a pullback of the (finite \'etale)  Lang isogeny  $\Prym\to\Prym$, where $\Prym$ is the kernel of the norm map $\Pic_{X'} \to \Pic_X$.
\end{proof}

\begin{lemma}\label{lem:unm}
Suppose $\mathcal{E}$ is rank $m$ vector bundle on $X'$, and  $a\colon\mathcal{E} \iso \sigma^*\mathcal{E}^\vee$ is a hermitian isomorphism.   Let $\cZ^{r}_{\cE} (a) \to \Sht^r_{ \mathrm{U}(n) }$ be the associated naive special cycle.
  There is an isomorphism  of $k$-stacks
  \[
    \Sht_{\mathrm{U}(n-m)}^r \iso \cZ^{r}_{\cE} (a) .
  \]
\end{lemma}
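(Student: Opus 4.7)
The plan is to exhibit mutually inverse morphisms between the two stacks, using the fact that a non-degenerate hermitian sub-bundle splits off as an orthogonal direct summand.

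First I would construct a morphism $\Phi\colon \Sht^r_{\mathrm{U}(n-m)} \to \cZ^r_{\mathcal{E}}(a)$ in the ``easy'' direction. Given an $S$-point of $\Sht^r_{\mathrm{U}(n-m)}$, i.e.\ a chain of rank $n-m$ hermitian bundles $(\mathcal{E}'_i, \varphi'_i, h'_i)$ on $X'\times S$ forming a $\mathrm{U}(n-m)$-shtuka, set $\mathcal{F}_i = \mathrm{pr}_{X'}^*\mathcal{E} \oplus \mathcal{E}'_i$ with hermitian form $\mathrm{pr}_{X'}^*a \oplus h'_i$ and shtuka modifications $\mathrm{id} \oplus \varphi'_i$. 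The obvious inclusion $t_i\colon\mathrm{pr}_{X'}^*\mathcal{E} \hookrightarrow \mathcal{F}_i$ is compatible with $\varphi_i$ and recovers $a$ upon hermitian pairing, hence defines an $S$-point of $\cZ^r_{\mathcal{E}}(a)$ in the sense of \cite[\S 7]{FYZ1}.

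Next I would construct the inverse $\Psi\colon \cZ^r_{\mathcal{E}}(a) \to \Sht^r_{\mathrm{U}(n-m)}$ via orthogonal complement. Given $(\mathcal{F}_i, \varphi_i, h, t_i) \in \cZ^r_{\mathcal{E}}(a)(S)$, the composition $\sigma^* t_i^\vee \circ h \circ t_i = \mathrm{pr}_{X'}^*a$ is an isomorphism, so the hermitian pairing restricted to $\mathrm{pr}_{X'}^*\mathcal{E} \subset \mathcal{F}_i$ is perfect in families. This makes $t_i$ a saturated sub-bundle and yields a canonical orthogonal decomposition $\mathcal{F}_i = \mathrm{pr}_{X'}^*\mathcal{E} \oplus \mathcal{E}'_i$ with $\mathcal{E}'_i$ locally free of rank $n-m$. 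Because each $\varphi_i$ preserves the hermitian form and restricts to the identity on $\mathrm{pr}_{X'}^*\mathcal{E}$ (by the compatibility condition built into the definition of $\cZ^r_{\mathcal{E}}(a)$), it also preserves $\mathcal{E}'_i$, giving the desired $\mathrm{U}(n-m)$-shtuka $(\mathcal{E}'_i, \varphi'_i, h'_i)$. The two constructions are visibly mutually inverse, since in $\Psi\circ\Phi$ the orthogonal complement of $\mathrm{pr}_{X'}^*\mathcal{E}$ in the direct sum is canonically the second summand, and in $\Phi\circ\Psi$ the canonical splitting reassembles the original data.

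The main thing to verify carefully will be that the orthogonal complement construction in the second step behaves well in families: one must check that the sheaf-theoretic cokernel of $t_i$ is locally free of rank $n-m$ on $X' \times S$, and that the composition $\mathcal{E}'_i \hookrightarrow \mathcal{F}_i \to \sigma^*(\mathrm{pr}_{X'}^*\mathcal{E})^\vee$ vanishes, so that $\mathcal{E}'_i$ is the actual orthogonal complement. Both follow from the perfectness of the restricted pairing once $a$ is an isomorphism, but should be spelled out, as this is where the hypothesis on $a$ is used. Everything else (compatibility of shtuka structures, bookkeeping with legs, functoriality in $S$) is formal.
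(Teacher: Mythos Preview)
Your proposal is correct and follows essentially the same approach as the paper's proof: construct the forward map by taking orthogonal direct sum with $(\mathcal{E},a)$, and the inverse by splitting off the orthogonal complement of the image of $\mathcal{E}_S$ using the non-degeneracy of $a$. Your write-up in fact spells out more carefully than the paper why the complement is locally free and why the modifications respect the splitting, which is exactly the point where the hypothesis that $a$ is an isomorphism enters.
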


\begin{proof}
For a $k$-scheme $S$, an object of  $\Sht_{ \mathrm{U}(n-m)}^r(S)$ is a chain of modifications
\begin{equation}\label{wee generic shtuka}
\mathcal{G}_0 \dashrightarrow\mathcal{G}_1\dashrightarrow \cdots \dashrightarrow \mathcal{G}_r \iso {}^\tau \mathcal{G}_0
\end{equation}
of vector bundles rank $n-m$ on $X'_S$,  endowed with compatible hermitian isomorphisms $h_i\colon\mathcal{G}_i \iso \sigma^* \mathcal{G}_i^\vee$, and satisfying some extra conditions \cite[Definition 6.6]{FYZ1}.
Abbreviating  $\mathcal{F}_i = \mathcal{E}_S \oplus \mathcal{G}_i$ for the orthogonal direct sum, the chain of modifications
\begin{equation}\label{generic shtuka}
\mathcal{F}_0 \dashrightarrow\mathcal{F}_1\dashrightarrow \cdots \dashrightarrow \mathcal{F}_r \iso {}^\tau \mathcal{F}_0
\end{equation}
defines an object of $\Sht_{ \mathrm{U}(n)}^r(S)$.  The canonical inclusions $\mathcal{E}_S \to \mathcal{F}_i$ define a lift of this point to $\cZ^{r}_{\cE} (a)$, and this  defines a morphism
\[
\Sht_{\mathrm{U}(n-m)}^r \to \cZ^{r}_{\cE} (a) .
\]

Conversely,  if one starts with a chain of modifications \eqref{generic shtuka} defining an object of $\Sht_{ \mathrm{U}(n)}^r(S)$, and a compatible family of isometric embeddings $\mathcal{E}_S \to \mathcal{F}_i$ defining a lift to $\mathcal{Z}_\mathcal{E}(a)(S)$, then  $\mathcal{E}_S \subset \mathcal{F}_i$  splits off an orthogonal complement $\mathcal{G}_i = \mathcal{E}_S^\perp$.  These define a chain of modifications \eqref{wee generic shtuka}, and hence an object of $\Sht_{ \mathrm{U}(n-m)}^r(S)$.  This provides an inverse to the above morphism.
\end{proof}

Recall from \cite[\S 4.3]{FYZ2} that  the moduli space $\Sht^r_{\mathrm{U}(n)}$ carries a collection of \emph{tautological line bundles} $\ell_1,\ldots, \ell_r \in \Pic( \Sht^r_{\mathrm{U}(n)})$, as well as a Chern class map
\[
c_1\colon\Pic( \Sht^r_{\mathrm{U}(n)}) \to \Ch^1( \Sht^r_{\mathrm{U}(n)}  ) .
\]
Denote by $p_1,\ldots, p_r\colon \Sht^r_{\mathrm{U}(n)} \to X'$ the morphisms recording the legs.

\begin{lemma}\label{lem:unn}
Suppose $\mathcal{E}_1$ is a line bundle on $X'$, $\mathcal{E}_2$ is a vector bundle of rank $n-1$, and
 $a_2\colon \mathcal{E}_2 \to \sigma^* \mathcal{E}_2^\vee$ is an injective hermitian morphism.
 If we endow the rank $n$ vector bundle $\mathcal{E}=\mathcal{E}_1\oplus \mathcal{E}_2$ with the hermitian morphism
\[
a = \begin{pmatrix} 0 & 0 \\ 0 & a_2 \end{pmatrix}\colon \mathcal{E} \to \sigma^*\mathcal{E}^\vee,
\]
then there is an isomorphism of $\Sht^r_{\mathrm{U}(n)}$-stacks
$
\cZ^r_{\mathcal{E}  }( a )  \iso  \cZ^r_{\mathcal{E}_2  }( a_2 )
$
such that the induced isomorphism of Chow groups identifies
\begin{equation}\label{unn class}
 [\cZ^r_{\mathcal{E}  }( a ) ] \in \Ch_0(\cZ^r_{\mathcal{E}  }( a ))
\end{equation}
with
\[
\left(\prod_{i=1}^r c_1(p_i^*\sigma^*\cE_1^{-1}\otimes\ell_i)\right)  \cdot
   [\cZ^r_{\mathcal{E}_2  }( a_2 ) ] \in \Ch_0(\cZ^r_{\mathcal{E}_2  }( a_2 )) .
\]
The intersection pairing
\[
\Ch_{r(n-1)} ( \Sht^r_{\mathrm{U}(n)} )  \times  \Ch_r(\cZ^r_{\mathcal{E}_2  }( a_2 ))
\to \Ch_0(\cZ^r_{\mathcal{E}_2  }( a_2 ))
\]
on the left is that of  \cite[\S 7.7]{FYZ1}.
\end{lemma}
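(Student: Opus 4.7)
The plan is to prove the lemma in two stages: first establish a classical isomorphism $\cZ^r_{\mathcal{E}}(a)\iso\cZ^r_{\mathcal{E}_2}(a_2)$ of $\Sht^r_{\mathrm{U}(n)}$-stacks, and second derive the class identity from an excess-intersection calculation on the derived enhancement.

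For the stack isomorphism, an $S$-point of $\cZ^r_{\mathcal{E}}(a)$ consists of a shtuka $\mathcal{F}_\bullet$ equipped with a compatible map $t=(t_1,t_2)\colon\mathcal{E}_{1,S}\oplus\mathcal{E}_{2,S}\to\mathcal{F}_\bullet$ whose hermitian pullback equals $a$.  The block form $a=\mathrm{diag}(0,a_2)$ decouples this into three conditions: the $(2,2)$-block identifies $(\mathcal{F}_\bullet,t_2)$ as a point of $\cZ^r_{\mathcal{E}_2}(a_2)$, the $(1,2)$-block forces $t_1$ to factor through the kernel $K$ of $\sigma^*t_2^\vee\circ h\colon\mathcal{F}_\bullet\to\sigma^*\mathcal{E}_{2,S}^\vee$, and the $(1,1)$-block requires $t_1$ to be self-isotropic.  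Since $a_2$ is injective, the hermitian form restricted to $K$ is generically non-degenerate, so any isotropic map from the line bundle $\mathcal{E}_{1,S}$ into $K$ must vanish at the generic point, and hence vanishes identically by torsion-freeness.  The inverse map $(\mathcal{F}_\bullet,t_2)\mapsto(\mathcal{F}_\bullet,0,t_2)$ is manifest, giving the desired stack isomorphism.

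For the class identity I work with the derived enhancements $\sZ^r_{\mathcal{E}}(a)$ and $\sZ^r_{\mathcal{E}_2}(a_2)$ of \cite[Theorem 6.6]{FYZ2}, whose virtual fundamental classes are the $[\cZ^r_{\mathcal{E}}(a)]$ and $[\cZ^r_{\mathcal{E}_2}(a_2)]$ appearing in the statement.  The decomposition $\mathcal{E}=\mathcal{E}_1\oplus\mathcal{E}_2$ exhibits $\sZ^r_{\mathcal{E}}(a)$ as the derived vanishing locus of the extra hermitian equations on $t_1$ inside an auxiliary Hom stack over $\sZ^r_{\mathcal{E}_2}(a_2)$.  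Because the first part of the proof identifies the classical truncation as $\cZ^r_{\mathcal{E}_2}(a_2)$ embedded via $t_1=0$, we are in an excess-intersection situation: the derived cut imposes $r$ equations (one per leg) whose classical solution set is already all of $\cZ^r_{\mathcal{E}_2}(a_2)$, so by the excess intersection formula the virtual class is the cap product of $[\cZ^r_{\mathcal{E}_2}(a_2)]$ with the top Chern class of a rank-$r$ excess bundle on $\cZ^r_{\mathcal{E}_2}(a_2)$.

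The key technical step is the identification of this excess bundle as $\bigoplus_{i=1}^r (p_i^*\sigma^*\mathcal{E}_1^{-1}\otimes\ell_i)$, from which the product formula for the top Chern class follows by multiplicativity.  At each leg $i$ the obstruction line bundle is built out of the tautological line bundle $\ell_i$ and the $\mathcal{E}_1$-component of the hermitian defect, with a twist coming from the Serre duality that enters the hermitian pairing.  Unwinding the conventions for the tautological bundles from \cite[\S 4.3]{FYZ2} and matching them against the block structure of the hermitian defect produces the precise factor $p_i^*\sigma^*\mathcal{E}_1^{-1}\otimes\ell_i$.  This bookkeeping — reconciling the Serre-dual conventions with the leg-wise factorization of the excess bundle — is the main obstacle, and mirrors analogous tangent-complex computations carried out in \cite[\S 4.3]{FYZ2}.
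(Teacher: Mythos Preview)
Your argument for the stack isomorphism is essentially the paper's: both show that the $\mathcal{E}_1$-component $t_1$ of any $S$-point must vanish, because it lands in the orthogonal complement of $\mathcal{E}_{2,S}$ inside $\mathcal{F}_\bullet$, a line subbundle on which the hermitian form is (generically) nondegenerate and hence anisotropic.

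For the class identity the paper takes a much shorter route than your excess-intersection calculation.  The class $[\cZ^r_{\mathcal{E}}(a)]$ is \emph{defined} in \cite[Definition~4.8]{FYZ2} via a stratification of $\cZ^r_{\mathcal{E}}$ by the kernel of the hermitian map: on the open-closed stratum $\cZ^r_{\mathcal{E}}[\mathcal{E}_1]^\circ \iso \cZ^{r,\circ}_{\mathcal{E}/\mathcal{E}_1}$ where the kernel is exactly $\mathcal{E}_1$, the class is by definition
\[
\left(\prod_{i=1}^r c_1(p_i^*\sigma^*\mathcal{E}_1^{-1}\otimes\ell_i)\right)\cdot [\cZ^{r,\circ}_{\mathcal{E}/\mathcal{E}_1}],
\]
and $[\cZ^r_{\mathcal{E}}(a)]$ is the restriction of this to $\cZ^r_{\mathcal{E}}(a)$.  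The paper then simply notes that under the stack isomorphism from the first step, $\cZ^r_{\mathcal{E}}(a)$ and $\cZ^r_{\mathcal{E}_2}(a_2)$ sit as the same open-closed piece of $\cZ^{r,\circ}_{\mathcal{E}/\mathcal{E}_1}$, and that the restriction of $[\cZ^{r,\circ}_{\mathcal{E}/\mathcal{E}_1}]$ to that piece is $[\cZ^r_{\mathcal{E}_2}(a_2)]$.  The Chern-class factor is thus not computed; it is read off from the definition.

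Your derived\,/\,excess-intersection approach is legitimate in principle --- \cite[Theorem~6.6, Remark~6.7]{FYZ2} guarantee that the Definition~4.8 class coincides with the virtual fundamental class of $\sZ^r_{\mathcal{E}}(a)$ --- but the step you yourself flag as ``the main obstacle,'' namely the identification of the rank-$r$ excess bundle with $\bigoplus_i p_i^*\sigma^*\mathcal{E}_1^{-1}\otimes\ell_i$, is asserted rather than carried out.  In effect you are reproving the compatibility of the two definitions in this special case, whereas the paper avoids the computation entirely by working with the stratified definition directly.
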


\begin{proof}
As in the proof of Lemma \ref{lem:unm}, an object of $\cZ^r_{\mathcal{E}_2  }( a_2 )(S)$ is a chain of modifications
\begin{equation}\label{another generic shtuka}
\mathcal{F}_0 \dashrightarrow\mathcal{F}_1\dashrightarrow \cdots \dashrightarrow \mathcal{F}_r \iso {}^\tau \mathcal{F}_0
\end{equation}
of rank $n$ vector bundles on $X'_S$, endowed with compatible hermitian isomorphisms $h_i\colon \mathcal{F}_i \iso \sigma^* \mathcal{F}_i^\vee$, and a compatible family of    morphisms $\mathcal{E}_{2,S} \to \mathcal{F}_i$ respecting the hermitian structures.
If we extend  the domain of these morphisms to $\mathcal{E}_S$ by
\begin{equation}\label{proj map}
\mathcal{E}_S=
\mathcal{E}_{1,S} \oplus \mathcal{E}_{2,S} \map{\mathrm{proj}} \mathcal{E}_{2,S}  \to \mathcal{F}_i,
\end{equation}
we obtain an object of $\cZ^r_\mathcal{E}(a)(S)$.   This defines $\cZ^r_{\mathcal{E}_2  }( a_2 ) \to \cZ^r_{\mathcal{E} }( a )$.

For the inverse, if we start with an object of $\cZ^r_{\mathcal{E} }( a )(S)$ defined by a chain of modifications \eqref{another generic shtuka} and a compatible family of maps $\mathcal{E}_S \to \mathcal{F}_i$ respecting hermitian structures, the image of
$\mathcal{E}_{1,S} \to \mathcal{F}_i$ must be orthogonal both to itself and to the image of  the (injective, as $a_2$ is injective) morphism $\mathcal{E}_{2,S} \to \mathcal{F}_i$.
The subsheaf $\mathcal{E}_{2,S}^\perp \subset \mathcal{F}_i$ is a line bundle on which the hermitian form on $\mathcal{F}_i$ is nondegenerate, which therefore has no nonzero isotropic local sections.
Thus the morphism $\mathcal{E}_{1,S} \to \mathcal{F}_i$ is zero, and  $\mathcal{E}_S \to \mathcal{F}_i$ has the form \eqref{proj map} for some morphism $\mathcal{E}_{2,S} \to \mathcal{F}_i$.
This morphism defines an object of $\cZ^r_{\mathcal{E}_2  }( a_2 )(S)$.

The identification of cycle classes is now an  exercise in unpacking \cite[Definition 4.8]{FYZ2}.
Using the notation found there, there is a commutative diagram
\[
\xymatrix{
& {\mathcal{Z}^r_{\mathcal{E}_2} (a_2)  }  \ar[d]  \ar@{=}[rr] & &   {   \mathcal{Z}_\mathcal{E}^r (a) }  \ar[d] \\
{  \mathcal{Z}^r_{\mathcal{E}_2}  }    &    {  \mathcal{Z}_{ \mathcal{E}_2}^r[0]^\circ } \ar@{=}[r]  \ar[l] &  {  \mathcal{Z}_{\mathcal{E}/\mathcal{E}_1}^{ r,\circ }  }  \ar@{=}[r] &
{   \mathcal{Z}_\mathcal{E}^r[ \mathcal{E}_1]^\circ }  \ar[r]  & {  \mathcal{Z}_\mathcal{E}^r  }
}
\]
in which every arrow is an open and closed immersion, and the top horizontal equality is the isomorphism we have constructed above.
In \cite[Definition 4.4]{FYZ2} one finds the definition of a cycle class
\[
[ \mathcal{Z}_{\mathcal{E}/\mathcal{E}_1}^{r,\circ} ] \in \Ch_r(  \mathcal{Z}_{ \mathcal{E}/\mathcal{E}_1}^{r,\circ} )
\]
whose restriction to the open and closed substack $\mathcal{Z}^r_{\mathcal{E}_2}(a_2)$ is
 $[\cZ^r_{\mathcal{E}_2  }( a_2 ) ]$. On the other hand, the class \eqref{unn class} is defined as the restriction  of
\[
[ \mathcal{Z}_\mathcal{E}^r[ \mathcal{E}_1]^\circ ] \define \left(\prod_{i=1}^r c_1(p_i^*\sigma^*\cE_1^{-1}\otimes\ell_i)\right)  \cdot
  [ \mathcal{Z}_{\mathcal{E}/\mathcal{E}_1}^{r,\circ} ]\in \Ch_0(  \mathcal{Z}_{\mathcal{E}/\mathcal{E}_1}^{r,\circ}  )
\]
to the open and closed substack $\mathcal{Z}_\mathcal{E}^r(a)$.   The desired equality of cycle classes is immediate from this.
\end{proof}

We now specialize to the case  $n=2$, and combine Proposition \ref{prop:genus drop} with calculations of Feng-Yun-Zhang on $\Sht^r_{ \mathrm{U}(1) }$
to prove cases of Conjecture \ref{conj:ASW} on $\Sht^r_{ \mathrm{U}(2) }$.

\begin{theorem}\label{thm:intersection}
Suppose $\mathcal{E}=\mathcal{E}_1\oplus \mathcal{E}_2$ is a direct sum of line bundles on $X'$, endowed with a hermitian morphism of the form
\[
a = \begin{pmatrix} * & * \\ * & a_2 \end{pmatrix}\colon \mathcal{E} \to \sigma^*\mathcal{E}^\vee
\]
with $a_2\colon\mathcal{E}_2 \iso \sigma^*\mathcal{E}_2^\vee$ an isomorphism.  The naive special cycle
\[
 \cZ^r_\mathcal{E}(a) \to \Sht^r_{\mathrm{U}(2)}
\]
 is proper over $k$, and the degree of its associated $0$-cycle class
 \begin{equation}\label{intersection 0-cycle}
 [\cZ^r_{\mathcal{E}  }( a ) ] \in \Ch_0(\cZ^r_{\mathcal{E}  }( a ))
\end{equation}
 is given by the formula \eqref{ASW} predicted by Conjecture \ref{conj:ASW}.
\end{theorem}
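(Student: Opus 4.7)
The plan is to reduce Theorem \ref{thm:intersection} to a combination of the arithmetic Siegel-Weil formula in rank one (a case of Theorem \ref{thm:ASW}) and the genus-drop identity of Proposition \ref{prop:genus drop}, via a diagonalization of the hermitian form $a$. Writing $a = \bigl(\begin{smallmatrix} a_{11} & a_{12} \\ a_{21} & a_2 \end{smallmatrix}\bigr)$ in block form and setting $\gamma = -a_2^{-1}\circ a_{21}\colon \mathcal{E}_1 \to \mathcal{E}_2$, the unipotent automorphism $g = \bigl(\begin{smallmatrix} 1 & 0 \\ \gamma & 1 \end{smallmatrix}\bigr)$ of $\mathcal{E}$ transforms $a$ into the block-diagonal hermitian morphism $b\oplus a_2$, where $b = a_{11} - a_{12}a_2^{-1}a_{21}\colon \mathcal{E}_1\to\sigma^*\mathcal{E}_1^\vee$; the vanishing of the off-diagonal entries and the hermiticity of $b$ both follow from the identities $\sigma^*(a_{ij})^\vee = a_{ji}$. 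Since $(\mathcal{E},a)\iso(\mathcal{E},b\oplus a_2)$ as hermitian pairs, this replacement preserves the cycle $\cZ^r_\mathcal{E}(a)$, its class, and the geometric Fourier coefficient $\widetilde E_{(\mathcal{E},a)}(s,\chi)$.

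If $b\neq 0$, then $b\oplus a_2$ is injective, and Theorem \ref{thm:ASW} applies directly, giving both properness and \eqref{ASW}. From now on assume $b=0$. In this case Lemma \ref{lem:unn} yields an isomorphism $\cZ^r_\mathcal{E}(a)\iso\cZ^r_{\mathcal{E}_2}(a_2)$, and Lemma \ref{lem:unm} identifies the target with $\Sht^r_{\mathrm{U}(1)}$, which is proper over $k$ by Lemma \ref{lem:shtu1proper}. Because $\Sht^r_{\mathrm{U}(1)}$ is smooth of the expected dimension $r$, Lemma \ref{lem:LCIclasses} identifies $[\cZ^r_{\mathcal{E}_2}(a_2)]$ with the fundamental class $[\Sht^r_{\mathrm{U}(1)}]$, so the cycle-class formula of Lemma \ref{lem:unn} reduces the left-hand side of \eqref{ASW} to
\[
\deg\,[\cZ^r_\mathcal{E}(a)] \;=\; \deg\!\Big(\textstyle\prod_{i=1}^r c_1\bigl(p_i^*\sigma^*\mathcal{E}_1^{-1}\otimes\ell_i\bigr)\cap[\Sht^r_{\mathrm{U}(1)}]\Big).
\]

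On the Eisenstein side, Proposition \ref{prop:unr eisenstein coefficient} applied to $a_2$ gives the closed form $\underline E_{(\mathcal{E}_2,a_2)}(s,\chi) = \chi(\mathcal{E}_2)\,q^{-s\deg\omega_X}\mathscr{L}_1(s,\chi_0)^{-1}$. Substituting this into the geometric genus-drop identity \eqref{geometric genus drop} writes $\widetilde E_{(\mathcal{E},a)}(s,\chi)$ as a sum of two terms of the shape (power of $q$)${}^{\pm s}$ times a ratio of $\mathscr{L}$-factors, and the functional equation \eqref{L functional} exchanges these two terms under $s\mapsto -s$. Differentiating this closed form $r$ times at $s=0$ gives an entirely explicit expression for the right-hand side of \eqref{ASW}.

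The crux of the proof—and the main obstacle—will be the parallel geometric calculation of the $\mathrm{U}(1)$-intersection number in the displayed equation above. My plan is to expand the product into $2^r$ mixed terms $\prod_{i\in S}c_1(p_i^*\sigma^*\mathcal{E}_1^{-1})\prod_{j\notin S}c_1(\ell_j)\cap[\Sht^r_{\mathrm{U}(1)}]$ and to evaluate each one using the explicit description of the tautological bundles $\ell_j$ from \cite[\S 4.3]{FYZ2} together with the Prym/Lang-isogeny presentation of $\Sht^r_{\mathrm{U}(1)}$ over $(X')^r$ underlying Lemma \ref{lem:shtu1proper}. Term-by-term matching with the closed-form Eisenstein expression obtained in the previous paragraph, after accounting for the normalizing prefactor $q^{d(\mathcal{E})}/(\chi(\det\mathcal{E})(\log q)^r)$, should complete the verification of \eqref{ASW} in the remaining case.
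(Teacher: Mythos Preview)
Your reduction to the block-diagonal form $b\oplus a_2$ and the split into the injective case (Theorem \ref{thm:ASW}) versus the case $b=0$ is exactly the paper's argument, as is the chain of identifications $\cZ^r_{\mathcal{E}}(a)\iso\cZ^r_{\mathcal{E}_2}(a_2)\iso\Sht^r_{\mathrm{U}(1)}$ via Lemmas \ref{lem:unn}, \ref{lem:unm}, \ref{lem:shtu1proper}, \ref{lem:LCIclasses}, and the Eisenstein computation via \eqref{geometric genus drop} and Proposition \ref{prop:unr eisenstein coefficient}.

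The only place you diverge is the final geometric step, and here you are missing the observation that collapses it. The product $\prod_{i=1}^r c_1(p_i^*\sigma^*\mathcal{E}_1^{-1}\otimes\ell_i)\in\Ch^r(\Sht^r_{\mathrm{U}(1)})$ is not an intersection number to be computed from scratch: it is \emph{by definition} the Feng--Yun--Zhang special cycle class $[\cZ^r_{\mathcal{E}_1}(0)]$ on $\Sht^r_{\mathrm{U}(1)}$ (the $n=1$, $a=0$ instance of \eqref{FYZclass}), and its degree is already computed in \cite[Theorem 10.2]{FYZ2} as
\[
\deg[\cZ^r_{\mathcal{E}_1}(0)]=\frac{2}{(\log q)^r}\,\frac{d^r}{ds^r}\Big|_{s=0}\bigl(q^{d(\mathcal{E}_1)s}L(2s,\eta)\bigr).
\]
This matches on the nose the $r^{\mathrm{th}}$ derivative at $s=0$ of the closed form you obtain on the Eisenstein side (after the simplification via \eqref{L functional}, and using $d(\mathcal{E})=d(\mathcal{E}_1)$ since $\mathcal{E}_2\iso\sigma^*\mathcal{E}_2^\vee$). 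So your proposed expansion into $2^r$ monomials and term-by-term Prym calculation is unnecessary; the whole step is a citation. Conceptually, this is the point of the argument: the degenerate $n=2$ case of \eqref{ASW} reduces to the already-proved constant-term case on $\mathrm{U}(1)$.
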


\begin{proof}
If $a$ is injective, this is Theorem \ref{thm:ASW}.  If $a$ is not injective, the hypothesis that $a_2$ is an isomorphism implies that
$\mathrm{ker}(a) \subset \mathcal{E}$ is a complementary summand to $\mathcal{E}_2$.
As both sides of \eqref{ASW}   only depend on the isomorphism class of the pair $(\mathcal{E},a)$, we may  replace $\mathcal{E}_1$ with $\mathrm{ker}(a)$, and thereby reduce to the case
\[
a = \begin{pmatrix} 0 & 0 \\ 0 & a_2 \end{pmatrix} .
\]
For $a$ of this form, Lemmas \ref{lem:unm} and  \ref{lem:unn} provide us with isomorphisms
\begin{equation}\label{most simple cycle}
\Sht_{\mathrm{U}(1)}^r \iso \cZ_{\cE_2}^r(a_2) \iso \cZ_\cE^r(a)
\end{equation}
of $k$-stacks, all proper and smooth of dimension $r$ by Lemma \ref{lem:shtu1proper}.

The stack $\Sht^r_{ \mathrm{U}(1) }$ has its own naive special cycle
\[
\mathcal{Z}_{\mathcal{E}_1}(0) \iso \Sht^r_{ \mathrm{U}(1) },
\]
whose associated $0$-cycle class is defined \cite[Definition 4.8]{FYZ1} by
\begin{equation}\label{U(1)constant}
[ \cZ^r_{\mathcal{E}_1}(0) ] =    \prod_{i=1}^r c_1(p_i^*\sigma^*\cE_1^{-1}\otimes\ell_i) \in \Ch^r( \Sht^r_{\mathrm{U}(1)}).
\end{equation}

By Lemma \ref{lem:LCIclasses}, the cycle class
\[
  [\cZ^r_{\mathcal{E}_2  }( a_2 ) ] \in \Ch_r(\cZ^r_{\mathcal{E}_2  }( a_2 ))
\]
is just the usual fundamental class of the smooth $k$-stack $\cZ^r_{\mathcal{E}_2  }( a_2 )$, and it follows that
the first isomorphism in  \eqref{most simple cycle} identifies \eqref{U(1)constant} with
\[
\left(\prod_{i=1}^r c_1(p_i^*\sigma^*\cE_1^{-1}\otimes\ell_i)\right)  \cdot
   [\cZ^r_{\mathcal{E}_2  }( a_2 ) ] \in \Ch_0(\cZ^r_{\mathcal{E}_2  }( a_2 )) .
\]
In this last equality  $\ell_1,\ldots, \ell_r$ are the tautological line bundles on $\Sht^r_{ \mathrm{U}(n) }$, which pull-back under
\[
\Sht^r_{\mathrm{U}(1)} \iso \cZ^r_{\mathcal{E}_2  }( a_2 ) \to \Sht^r_{ \mathrm{U}(n) }
\]
 to the eponymous  line bundles on $\Sht^r_{\mathrm{U}(1)}$ appearing in  \eqref{U(1)constant}.

By Lemma \ref{lem:unn} and the preceding paragraph,  the composition   \eqref{most simple cycle} identifies \eqref{U(1)constant} with \eqref{intersection 0-cycle}.
In particular these $0$-cycles have the same degree.
The degree of \eqref{U(1)constant} is computed in \cite[Theorem 10.2]{FYZ2}, and we deduce the explicit formula
\begin{equation}\label{degen degree 1}
\deg\, [\cZ^r_{\mathcal{E}  }( a ) ]
=  \frac{2}{ (\log q)^{r}}    \cdot  \frac{d^r}{ds^r}\Big|_{s=0}   \left(   q^{d(\mathcal{E}_1) s}L(2s,\eta)  \right).
\end{equation}

Proposition~\ref{prop:genus drop}, rewritten as  \eqref{geometric genus drop},  expresses the $(\mathcal{E},a)$-coefficient of the spherical Eisenstein series $E(g,s,\chi)$  on $H_2(\A_F)$ in terms of the $(\mathcal{E}_2,a_2)$-coefficient of the spherical Eisenstein series $\underline{E}(g,s,\chi)$ on $H_1(\A_F)$.    This latter Fourier coefficient is known by the explicit formula
of Proposition \ref{prop:unr eisenstein coefficient}.  Combining these formulas, and simplifying using the global functional equation \eqref{L functional},  one finds
\[
\frac{   q^{  d(\mathcal{E}_1)     }   }{ \chi( \mathcal{E}_1  )  \chi( \mathcal{E}_2  )     } \cdot
 \widetilde{E}_{  ( \mathcal{E} ,a) } ( s , \chi)
=
q^{  d(\mathcal{E}_1)  s } L(2s,\eta) +       q^{ -   d(\mathcal{E}_1) s } L(-2s,\eta)  .
\]
Taking the $r^\mathrm{th}$ central derivative of both sides, and using the standing assumption that $r$ is even, we deduce
\begin{equation}\label{degen degree 2}
\frac{   q^{  d(\mathcal{E} )}   }   {   \chi(  \det( \mathcal{E} )  )  }
\cdot
\frac{d^r}{ds^r}\Big|_{s=0} \widetilde{E}_{  ( \mathcal{E} ,a)  } ( s , \chi)
=
2  \cdot
\frac{d^r}{ds^r}\Big|_{s=0}   \left(   q^{d(\mathcal{E}_1) s}L(2s,\eta) \right) .
\end{equation}
Note that we have used $d(\mathcal{E}_1) = d(\mathcal{E})$, which follows from
 $\mathcal{E}_2 \iso \sigma^* \mathcal{E}_2^\vee$.

Comparing \eqref{degen degree 1} with \eqref{degen degree 2} completes the proof.
\end{proof}


\subsection{Connection with the doubling kernel}
\label{ss:main results}


We now study  intersections of middle codimension cycles on $\Sht^r_{ \mathrm{U}(n)}$, and so assume  $n=2m$ is even.
Let \[\chi\colon\A_{F'}^\times \to \C^\times\] be an unramified Hecke character whose restriction to $\A_F^\times$ is trivial.

 Fix a pair $(\mathcal{E}_2,a_2)$ consisting of a rank $m$ vector bundle $\mathcal{E}_2$ on $X'$ and a hermitian morphism
$a_2\colon\mathcal{E}_2 \to \sigma^* \mathcal{E}_2^\vee$.  Assume that the associated naive special cycle
\begin{equation}\label{E_2 naive}
\cZ^r_{\mathcal{E}_2}(a_2) \to \Sht^r_{ \mathrm{U}(n) }
\end{equation}
is proper over $k$, so that it determines  a special cycle class
\[
[ \cZ^r_{\mathcal{E}_2}(a_2)  ]  \in   \Ch^{rm}_c(  \Sht^r_{ \mathrm{U}(n) }  )
\]
in the middle codimension Chow group with proper support.

Holding $(\mathcal{E}_2,a_2)$ fixed, consider pairs $(\mathcal{E}_1,a_1)$ with $\mathcal{E}_1$ another rank $m$ vector bundle on $X'$ and $a_1\colon\mathcal{E}_1 \to \sigma^* \mathcal{E}_1^\vee$ a hermitian morphism.
As in \cite[Lemma 7.7]{FYZ1}, there is a decomposition of the fiber product
\begin{equation}\label{naive intersection}
\cZ^r_{\mathcal{E}_1}(a_1)  \times_{  \Sht^r_{ \mathrm{U}(n) }  } \cZ^r_{\mathcal{E}_2}(a_2)
\iso
\bigsqcup_{  a = \left( \begin{smallmatrix} a_1 & * \\ * & a_2 \end{smallmatrix} \right) }  \cZ^r_{\mathcal{E}}(a )
\end{equation}
into open and closed substacks,
in which $\mathcal{E}=\mathcal{E}_1\oplus \mathcal{E}_2$,  and the disjoint union is over the same hermitian morphisms $a\colon\mathcal{E}\to \sigma^*\mathcal{E}^\vee$ as in   \eqref{eq:geometric doubling coefficients}.
As explained in \cite[\S 7.7]{FYZ1},  there is an intersection pairing
\[
\Ch_{rm} ( \cZ^r_{\mathcal{E}_1}(a_1) )  \times \Ch_{rm} ( \cZ^r_{\mathcal{E}_2}(a_2) )
\to \bigoplus_{  a = \left( \begin{smallmatrix} a_1 & * \\ * & a_2 \end{smallmatrix} \right) }  \Ch_0( \cZ^r_{\mathcal{E}}(a )  ) ,
\]
under which the special cycle classes of \eqref{FYZclass} satisfy the expected (but not obvious) intersection relation
\begin{equation}\label{linear invariance}
[ \cZ^r_{\mathcal{E}_1}(a_1) ] \cdot [  \cZ^r_{\mathcal{E}_2}(a_2) ]
=
\sum_{  a = \left( \begin{smallmatrix} a_1 & * \\ * & a_2 \end{smallmatrix} \right) } [  \cZ^r_{\mathcal{E}}(a ) ]
\end{equation}
 of \cite[Theorem 7.1]{FYZ2}.

The fiber product on the left hand side of \eqref{naive intersection} is finite over $\cZ^r_{\mathcal{E}_2}(a_2)$, and so is proper over $k$.
Hence each $\cZ^r_\mathcal{E}(a)$ appearing on the right hand side is also proper.
Using  the pushforwards   \eqref{bad push} and \eqref{good push}, we now   view \eqref{linear invariance} as an equality of cycle classes in the Chow group $\Ch_c^{rn}( \Sht^r_{\mathrm{U}(n)} )$ of $0$-cycles with proper support, with the intersection pairing on the left hand side  now understood as \eqref{proper pairing}.

Applying the degree map \eqref{degree} to both sides of \eqref{linear invariance}, Conjecture \ref{conj:ASW} implies the first equality in the  (conjectural) intersection formula
\begin{align} \label{double intersection}
&  \deg \big(  [ \cZ^r_{\mathcal{E}_1}(a_1) ] \cdot [  \cZ^r_{\mathcal{E}_2}(a_2) ]  \big)      \\
& \stackrel{?}{=}
  \frac{1}{ (\log q)^r }   \cdot
   \frac{q^{md(\mathcal{E}) }   }{ \chi( \det(\mathcal{E} ) )     }
 \sum_{  a = \left( \begin{smallmatrix} a_1 & * \\ * & a_2 \end{smallmatrix} \right) }   \frac{d^r}{ds^r} \Big|_{s=0}
   \widetilde{E}_{  ( \mathcal{E} ,a)  } ( s , \chi)    \nonumber   \\
&  \stackrel{  \eqref{eq:geometric doubling coefficients} }{=}    \frac{1}{ (\log q)^r}   \cdot
\frac{q^{md(\mathcal{E}_1) }    q^{md(\mathcal{E}_2) }
}{ \chi( \det(\mathcal{E}_1 ) )  \chi( \det(\mathcal{E}_2 ) )     }   \cdot
\frac{d^r}{ds^r} \Big|_{s=0}
\widetilde{D}_{(\mathcal{E}_1,a_1) ,  ( \mathcal{E}_2, a_2) }  ( s , \chi )   \nonumber
\end{align}
for any Hecke character
$
\chi\colon \A_{F'}^\times \to \C^\times
$
  whose restriction to $\A_F^\times$ is trivial.
On the right hand side is a (double) Fourier coefficient of the doubling kernel
\[
\widetilde{D}(g_1,g_2,s,\chi) \define  q^{  ns \deg ( \omega_X)  }    \cdot    \mathscr{L}_{n}(s,\chi_0)  \cdot  D ( g_1,g_2 ,s,\chi )
\]
from \eqref{Dkernel},  renormalized as in \eqref{renormalized eisenstein}.

\begin{theorem}\label{thm:final}
Suppose $\mathcal{E}_2$ is a line bundle on $X'$ (so $m=1$ and $n=2$), and  $a_2\colon \mathcal{E}_2 \iso \sigma^* \mathcal{E}_2^\vee$ is a hermitian isomorphism.
\begin{enumerate}
\item
The naive special cycle \eqref{E_2 naive} is proper over $k$.
\item
The equality  \eqref{double intersection} holds for every pair $(\mathcal{E}_1,a_1)$.
\item
There exists a $K_1$-fixed automorphic form $\mathscr{D} \in \mathcal{A}(H_1)$ whose geometric Fourier coefficients  are given by
\[
\mathscr{D}_{ (\mathcal{E}_1,a_1) } =
  \frac{ \chi(  \mathcal{E}_1)  } {  q^{   d(\mathcal{E}_1 ) } }  \,   \deg  \big(    [ \cZ^r_{\mathcal{E}_1}(a_1)]  \cdot [ \cZ^r_{\mathcal{E}_2}(a_2)] \big)
\]
for every pair $(\mathcal{E}_1,a_1)$ consisting of a line  bundle $\mathcal{E}_1$ on $X'$ and a hermitian morphism $a_1:\mathcal{E}_1 \to \sigma^* \mathcal{E}_1^\vee$.
\item
Assuming Conjecture \ref{conj:modularity}, the equality
\begin{align*}
& \deg \big(   \vartheta^{r,\chi} (f) \cdot [ \cZ^r_{\mathcal{E}_2}(a_2)] \big)  \\
& =
  f_{( \mathcal{E}_2 , -a_2)  }    \frac{    q^{d(\mathcal{E}_2) }    }{ (\log q)^r}
    \frac{d^r}{ds^r}\Big|_{s=0}\left( q^{  2s \deg ( \omega_X)  }    L( s +1/2  ,  \mathrm{BC}(\pi) \otimes \chi )\right),
\end{align*}
of Conjecture \ref{conj:GKZ} holds for every $K_1$-fixed $f\in \pi \subset \mathcal{A}(H_1)$.
\end{enumerate}
\end{theorem}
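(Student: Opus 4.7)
The plan is to dispatch the four parts in order, with each relying on the previous.

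For (1), I would invoke Lemma \ref{lem:unm} with $m=1$, $n=2$, which gives an isomorphism $\cZ^r_{\mathcal{E}_2}(a_2)\iso\Sht^r_{\mathrm{U}(1)}$. By Lemma \ref{lem:shtu1proper}, the target is finite \'etale over $(X')^r$, hence proper over $k$. For (2), I would start from the decomposition \eqref{naive intersection} and the intersection identity \eqref{linear invariance} in $\Ch_c^{rn}(\Sht^r_{\mathrm{U}(n)})$. Each summand $[\cZ^r_{\mathcal{E}}(a)]$ has $a$ of precisely the shape covered by Theorem \ref{thm:intersection} (since the lower-right block is the fixed hermitian isomorphism $a_2$), so applying that theorem term-by-term and summing yields
\[
\deg\big([\cZ^r_{\mathcal{E}_1}(a_1)]\cdot[\cZ^r_{\mathcal{E}_2}(a_2)]\big)=\frac{1}{(\log q)^r}\frac{q^{d(\mathcal{E})}}{\chi(\det\mathcal{E})}\frac{d^r}{ds^r}\Big|_{s=0}\sum_a\widetilde{E}_{(\mathcal{E},a)}(s,\chi).
\]
The sum is exactly the doubled Fourier coefficient $\widetilde{D}_{(\mathcal{E}_1,a_1),(\mathcal{E}_2,a_2)}(s,\chi)$ by \eqref{eq:geometric doubling coefficients}, and the decomposition $d(\mathcal{E})=d(\mathcal{E}_1)+d(\mathcal{E}_2)$, $\chi(\det\mathcal{E})=\chi(\det\mathcal{E}_1)\chi(\det\mathcal{E}_2)$ gives \eqref{double intersection}.

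For (3), I would simply take
\[
\mathscr{D}(g_1)\define\frac{1}{(\log q)^r}\frac{q^{d(\mathcal{E}_2)}}{\chi(\det\mathcal{E}_2)}\frac{d^r}{ds^r}\Big|_{s=0}\widetilde{D}_{\square,(\mathcal{E}_2,a_2)}(g_1,s,\chi),
\]
where $\widetilde{D}_{\square,(\mathcal{E}_2,a_2)}$ is the renormalized variant of the geometric new way kernel \eqref{geometric new way kernel}. This is manifestly a $K_1$-fixed automorphic form (once one notes that the renormalization \eqref{renormalized eisenstein} makes the underlying Eisenstein series holomorphic at $s=0$, so the derivative in $s$ preserves automorphy), and rearranging the formula from (2) shows that its geometric Fourier coefficients match $\frac{\chi(\mathcal{E}_1)}{q^{d(\mathcal{E}_1)}}\deg([\cZ^r_{\mathcal{E}_1}(a_1)]\cdot[\cZ^r_{\mathcal{E}_2}(a_2)])$ as required.

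For (4), assume Conjecture \ref{conj:modularity} and set $G(h)\define\deg(Z^{r,\chi}(h)\cdot[\cZ^r_{\mathcal{E}_2}(a_2)])$, an automorphic form on $H_1$. Its $(\mathcal{E}_1,a_1)$-geometric Fourier coefficient is $\frac{\chi(\mathcal{E}_1)}{q^{d(\mathcal{E}_1)}}\deg([\cZ^r_{\mathcal{E}_1}(a_1)]\cdot[\cZ^r_{\mathcal{E}_2}(a_2)])$ by the definition of $Z^{r,\chi}$ and linearity of intersection and degree; by (3), this equals $\mathscr{D}_{(\mathcal{E}_1,a_1)}$. Since an automorphic form is determined by its geometric Fourier coefficients, $G=\mathscr{D}$. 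Then
\[
\deg\big(\vartheta^{r,\chi}(f)\cdot[\cZ^r_{\mathcal{E}_2}(a_2)]\big)=\int_{[H_1]}f(h)\mathscr{D}(h)\,dh,
\]
and substituting the definition of $\mathscr{D}$ and applying the geometric duplication formula \eqref{geometric new way} inside the derivative produces the claimed expression involving $f_{(\mathcal{E}_2,-a_2)}$ and the $r$th central derivative of $q^{2s\deg(\omega_X)}L(s+1/2,\mathrm{BC}(\pi)\otimes\chi)$; the $\mathscr{L}_2(s,\chi_0)$ appearing in the denominator of \eqref{geometric new way} cancels exactly the one built into the renormalization $\widetilde{D}=q^{ns\deg(\omega_X)}\mathscr{L}_n D$.

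The conceptually nontrivial step is (2); it is precisely where the extended arithmetic Siegel-Weil formula of Theorem \ref{thm:intersection} (covering non-injective $a$) is indispensable, since generic terms in the sum \eqref{naive intersection} will have $a$ not injective. All subsequent steps are formal consequences: (3) is a rewriting of (2) as a Fourier expansion, and (4) is an unfolding of the theta integral followed by an application of the duplication formula \eqref{geometric new way}, a careful tracking of normalization factors being the only real bookkeeping hazard.
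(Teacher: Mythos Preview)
Your proposal is correct and follows essentially the same route as the paper's proof: part (1) via Lemmas \ref{lem:unm} and \ref{lem:shtu1proper}, part (2) by applying Theorem \ref{thm:intersection} to each term of \eqref{linear invariance} and recognizing the resulting sum as \eqref{eq:geometric doubling coefficients}, part (3) by defining $\mathscr{D}$ as the $r^{\mathrm{th}}$ central derivative of the renormalized new way kernel, and part (4) by matching Fourier coefficients to identify $\mathscr{D}$ with $\deg(Z^{r,\chi}(\cdot)\cdot[\cZ^r_{\mathcal{E}_2}(a_2)])$ and then invoking \eqref{geometric new way}. Your commentary that (2) is the substantive step, hinging on the extended arithmetic Siegel--Weil formula for non-injective $a$, is exactly the point the paper makes as well.
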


\begin{proof}
The properness claim  (1) follows from   Lemmas   \ref{lem:shtu1proper} and \ref{lem:unm}.

For (2),  we have explained above that  \eqref{double intersection} is a consequence of  Conjecture \ref{conj:ASW}, but one does not need to know that conjecture in full generality:
for a fixed  $(\mathcal{E}_2,a_2)$, to deduce \eqref{double intersection} one only needs to know Conjecture \ref{conj:ASW}  for those pairs $(\mathcal{E},a)$ of the form $\mathcal{E}=\mathcal{E}_1\oplus \mathcal{E}_2$ with
\[
 a = \left( \begin{matrix} a_1 & * \\ * & a_2 \end{matrix} \right) .
\]
For $(\mathcal{E}_2,a_2)$ satisfying our current hypotheses,  this is  Theorem \ref{thm:intersection}.

Analogues of (3) and (4) hold for any $n=2m$ and any pair $(\mathcal{E}_2,a_2)$ for which  \eqref{double intersection} holds, so that is the generality in which we prove them.

For any pair $(\mathcal{E}_1,a_1)$   the function $\widetilde{D}_{(\mathcal{E}_1,a_1) ,  ( \mathcal{E}_2, a_2) }  ( s , \chi )$ appearing in \eqref{double intersection} is the $(\mathcal{E}_1,a_1)$ Fourier coefficient of the
renormalized new way kernel
\[
\widetilde{D}_{ \square ,  ( \mathcal{E}_2, a_2) }  ( g_1, s , \chi )
=   q^{  ns \deg ( \omega_X)  }    \cdot    \mathscr{L}_{n}(s,\chi_0)  \cdot
D_{ \square ,  ( \mathcal{E}_2, a_2) }  ( g_1, s , \chi )
\]
of  \eqref{geometric new way kernel},  an automorphic form in the variable $g_1\in H_m(\A_F)$.
This proves that (3) holds with
\[
\mathscr{D}(g_1) \define
 \frac{1}{ (\log q)^r}   \cdot
\frac{   q^{md(\mathcal{E}_2) }  }{   \chi( \det(\mathcal{E}_2 ) )     }   \cdot
\frac{d^r}{ds^r} \Big|_{s=0}
\widetilde{D}_{ \square ,  ( \mathcal{E}_2, a_2) }  ( g_1, s , \chi ) ,
\]
as the equality \eqref{double intersection} says precisely that
\[
 \mathscr{D}_{ (\mathcal{E}_1,a_1)}
=
 \frac{ \chi( \det(\mathcal{E}_1 ) ) }{q^{md(\mathcal{E}_1) }} \deg \big(  [ \cZ^r_{\mathcal{E}_1}(a_1) ] \cdot [  \cZ^r_{\mathcal{E}_2}(a_2) ]  \big)    .
 \]

This last equality also shows that  the automorphic form $Z^{r,\chi}$ of Conjecture \ref{conj:modularity} satisfies
\[
\mathscr{D}(g_1) =
  \deg \big(  Z^{r,\chi} (g_1)   \cdot [  \mathcal{Z}_{\mathcal{E}_2}^r(a_2) ]  \big),
\]
because both sides have the same geometric Fourier coefficients.
Recalling the definition \eqref{arith theta} of the arithmetic theta lift, for any $K_m$-fixed cusp form $f\in \pi \subset \mathcal{A}(H_m)$ we now compute
\begin{align*}
& \deg \big(   \vartheta^{r,\chi} (f) \cdot [ \mathcal{Z}_{\mathcal{E}_2}^r(a_2)] \big)  \\
&=
 \int_{[H_m]}  f(g_1)   \, \deg \big(   Z^{r,\chi} (g_1) \cdot [ \mathcal{Z}_{\mathcal{E}_2}^r(a_2)]  \big)  \, dg_1  \\
 & =
  \int_{[H_m]}  f(g_1)   \mathscr{D}(g_1)  \, dg_1  \\
 &  =
  \frac{1}{ (\log q)^r}   \cdot
\frac{    q^{md(\mathcal{E}_2) }
}{  \chi( \det(\mathcal{E}_2 ) )     }   \cdot
\frac{d^r}{ds^r} \Big|_{s=0}
 \int_{[H_m]}  f(g_1)
\widetilde{D}_{ \square ,  ( \mathcal{E}_2, a_2) }  ( g_1 , s , \chi )  \, dg_1 .
\end{align*}
Theorem \ref{thm:duplication},  restated as  \eqref{geometric new way}, shows that
 \begin{align*}
  &\int_{ [H_m ] }   f(g_1 ) \widetilde{D}_{\square, ( \mathcal{E}_2,a_2) }( g_1   ,s,\chi )   \, d g_1  \\
    & =
 \chi(\det(\mathcal{E}_2))    \cdot   f_{( \mathcal{E}_2 , -a_2)  }     \cdot   q^{  ns \deg ( \omega_X)  }
  L( s +1/2  , \mathrm{BC}(\pi)  \otimes \chi )      ,
\end{align*}
and we deduce
\begin{align*}
& \deg \big(   \vartheta^{r,\chi} (f) \cdot [ \cZ^r_{\mathcal{E}_2}(a_2)] \big)  \\
& =
  f_{( \mathcal{E}_2 , -a_2)  }    \frac{    q^{md(\mathcal{E}_2) }    }{ (\log q)^r}
    \frac{d^r}{ds^r}\Big|_{s=0}\left( q^{  ns \deg ( \omega_X)  }    L( s +1/2  ,  \mathrm{BC}(\pi) \otimes \chi )\right).
\end{align*}
This both proves (4), and shows more generally that Conjecture \ref{conj:GKZ} is a consequence of  the conjectural equality \eqref{double intersection}.
\end{proof}

\bibliographystyle{alpha.bst}


\begin{thebibliography}{ABCDE}

\bibitem[Bo83]{Bo1}
Siegfried B\"{o}cherer.
\newblock \"{U}ber die {F}ourier-{J}acobi-{E}ntwicklung {S}iegelscher
  {E}isensteinreihen.
\newblock {\em Math. Z.}, 183(1):21--46, 1983.

\bibitem[Bo85]{Bo2}
Siegfried B\"{o}cherer.
\newblock \"{U}ber die {F}ourier-{J}acobi-{E}ntwicklung {S}iegelscher
  {E}isensteinreihen. {II}.
\newblock {\em Math. Z.}, 189(1):81--110, 1985.

\bibitem[Clo91]{Clo}
Laurent Clozel.
\newblock Repr\'{e}sentations {G}aloisiennes associ\'{e}es aux
  repr\'{e}sentations automorphes autoduales de {${\rm GL}(n)$}.
\newblock {\em Inst. Hautes \'{E}tudes Sci. Publ. Math.}, (73):97--145, 1991.

\bibitem[FYZa]{FYZ1}
Tony Feng, Zhiwei Yun, and Wei Zhang.
\newblock Higher {S}iegel-{W}eil formula for unitary grouyps: the non-singular
  terms.

\bibitem[FYZb]{FYZ2}
Tony Feng, Zhiwei Yun, and Wei Zhang.
\newblock Higher theta series for unitary groups over function fields.





\bibitem[GS19]{GaSa}
Luis~E. Garcia and Siddarth Sankaran.
\newblock Green forms and the arithmetic {S}iegel-{W}eil formula.
\newblock {\em Invent. Math.}, 215(3):863--975, 2019.

\bibitem[GS20]{GiSo}
David Ginzburg and David Soudry.
\newblock Integrals derived from the doubling method.
\newblock {\em Int. Math. Res. Not. IMRN}, (24):10553--10596, 2020.



\bibitem[HKS96]{HKS}
Michael Harris, Stephen~S. Kudla, and William~J. Sweet.
\newblock Theta dichotomy for unitary groups.
\newblock {\em J. Amer. Math. Soc.}, 9(4):941--1004, 1996.

\bibitem[Ike94]{Ike}
Tamotsu Ikeda.
\newblock On the theory of {J}acobi forms and {F}ourier-{J}acobi coefficients
  of {E}isenstein series.
\newblock {\em J. Math. Kyoto Univ.}, 34(3):615--636, 1994.



\bibitem[Kud04]{Kud}
Stephen Kudla.
\newblock Special cycles and derivatives of {E}isenstein series.
\newblock In {\em Heegner points and {R}ankin {$L$}-series}, volume~49 of {\em
  Math. Sci. Res. Inst. Publ.}, pages 243--270. Cambridge Univ. Press,
  Cambridge, 2004.




\bibitem[KR14]{KR}
Stephen Kudla and Michael Rapoport.
\newblock Special cycles on unitary {S}himura varieties {II}: {G}lobal theory.
\newblock {\em J. Reine Angew. Math.}, 697:91--157, 2014.



\bibitem[KRY06]{KRY}
Stephen~S. Kudla, Michael Rapoport, and Tonghai Yang.
\newblock {\em Modular forms and special cycles on {S}himura curves}, volume
  161 of {\em Annals of Mathematics Studies}.
\newblock Princeton University Press, Princeton, NJ, 2006.



\bibitem[Li92]{Li}
Jian-Shu Li.
\newblock Nonvanishing theorems for the cohomology of certain arithmetic
  quotients.
\newblock {\em J. Reine Angew. Math.}, 428:177--217, 1992.



\bibitem[LL21]{LL}
Chao Li and Yifeng Liu.
\newblock Chow groups and {$L$}-derivatives of automorphic motives for unitary
  groups.
\newblock {\em Ann. of Math. (2)}, 194(3):817--901, 2021.

\bibitem[Lur]{Lur} Jacob Lurie. \newblock Spectral Algebraic Geometry. Available at \emph{https://www.math.ias.edu/~lurie/papers/SAG-rootfile.pdf}.

\bibitem[LZ22]{LZ}
Chao Li and Wei Zhang.
\newblock Kudla-{R}apoport cycles and derivatives of local densities.
\newblock {\em J. Amer. Math. Soc.}, 35(3):705--797, 2022.



\bibitem[Liu11]{Liu}
Yifeng Liu.
\newblock Arithmetic theta lifting and {$L$}-derivatives for unitary groups,
  {I}.
\newblock {\em Algebra Number Theory}, 5(7):849--921, 2011.



\bibitem[Min11]{Min}
Alberto M\'{\i}nguez.
\newblock Unramified representations of unitary groups.
\newblock In {\em On the stabilization of the trace formula}, volume~1 of {\em
  Stab. Trace Formula Shimura Var. Arith. Appl.}, pages 389--410. Int. Press,
  Somerville, MA, 2011.

\bibitem[PSR87]{PSRa}
Ilya Piatetski-Shapiro and Stephen Rallis.
\newblock {$L$}-functions for the classical groups.
\newblock In {\em Explicit constructions of automorphic {$L$}-functions},
  volume 1254 of {\em Lecture Notes in Mathematics}, pages vi+152.
  Springer-Verlag, Berlin, 1987.

\bibitem[PSR88]{PSRb}
I.~Piatetski-Shapiro and S.~Rallis.
\newblock A new way to get {E}uler products.
\newblock {\em J. Reine Angew. Math.}, 392:110--124, 1988.

\bibitem[Qin07]{Qin}
Yujun Qin.
\newblock An integral representation of standard automorphic {$L$} functions
  for unitary groups.
\newblock {\em Int. J. Math. Math. Sci.}, pages Art. ID 64284, 22, 2007.

\bibitem[Tan99]{Tan}
Victor Tan.
\newblock Poles of {S}iegel {E}isenstein series on {${\rm U}(n,n)$}.
\newblock {\em Canad. J. Math.}, 51(1):164--175, 1999.

\end{thebibliography}

\end{document}